\numberwithin{equation}{section}
\newcommand\lam{\lambda}
\renewcommand\phi{\varphi}
\def\d{\mathbb{D}}
\def\c{\mathbb{C}}
\newcommand\ov{\overline}
\newcommand\inv{^{-1}}
\newcommand\bbm{\begin{bmatrix}}
\newcommand\ebm{\end{bmatrix}}
\newcommand\bpm{\begin{pmatrix}}
\newcommand\epm{\end{pmatrix}}
\def\m{\mathcal{M}}
\def\ph{\phi}
\newcommand\calm{\mathcal{M}}
\newcommand\df{\stackrel{\rm def}{=}}
\newcommand\ess{\mathcal{S}}
\def\be{\begin{equation}}
\def\ee{\end{equation}}
\def\ip#1#2{\langle #1,#2\rangle}
\def\s0{s_0}
\def\p0{p_0}
\DeclareMathOperator{\spn}{Span}
\theoremstyle{definition}
\newtheorem{defin}[equation]{Definition}
\newtheorem{lem}[equation]{Lemma}
\newtheorem{thm}[equation]{Theorem}
\newtheorem{example}[equation]{Example}
\newtheorem{remark}[equation]{Remark}
\begin{document}

\title[Models of holomorphic functions on the symmetrized skew bidisc]{Models of holomorphic functions on \newline the symmetrized skew bidisc}

\author{Connor Evans}
\address{School of Mathematics,  Statistics and Physics, Newcastle University, Newcastle upon Tyne
	NE\textup{1} \textup{7}RU, U.K.}
\email{CEvansMathematics@outlook.com}

\author{Zinaida A. Lykova}
\address{School of Mathematics,  Statistics and Physics, Newcastle University, Newcastle upon Tyne
	NE\textup{1} \textup{7}RU, U.K.}
\email{Zinaida.Lykova@ncl.ac.uk}

\author{N. J. Young}
\address{School of Mathematics, Statistics and Physics, Newcastle University, Newcastle upon Tyne NE1 7RU, U.K.}
\email{Nicholas.Young@ncl.ac.uk}

\date{30th March, 2026}

\keywords{Schur class, symmetrized bidisc, Hilbert space models}

\thanks{Partially supported by the Engineering and Physical Sciences Research Council grants EP/N03242X/1 and  DTP21  EP/T517914/1. }

\subjclass{ 32A10, 30E05, 47B99, 47N70}


\begin{abstract}
The purpose of this paper is to develop the theory of holomorphic functions 
with modulus bounded by $1$ on  the symmetrized skew bidisc
\[ \mathbb{G}_{r} \stackrel{\rm def}{=}  \Big\{( \lambda_{1}+r\lambda_{2} ,r\lambda_{1}\lambda_{2}): \lambda_{1}\in \mathbb{D}, \lambda_{2}\in\mathbb{D}\Big\},
\]
for a fixed $r \in (0,1)$. We show the existence of a realization formula and a model formula for such holomorphic functions.

\end{abstract}

\maketitle

\section{Introduction}
In this paper we shall generalize some results from long-established function theory of the unit disc  $\mathbb{D}$  and from the theory of holomorphic functions on the bidisc $\mathbb{D}^{2}$  and  the symmetrized bidisc $\mathbb{G}$
 to holomorphic functions on  the symmetrized skew bidisc
$ \mathbb{G}_{r} $, for a fixed $r \in (0,1)$.

Recall that the {\em Schur class}, $\ess(\d)$, is the set of holomorphic functions $\ph$ on the unit disc $\d$ such that the supremum norm 
$\|\ph\|_\infty = \sup_{z\in\d}|\ph(z)|\leq 1$.
The notions of models and realizations of functions are useful for the understanding of the Schur class.
A {\em model} of a function $\ph:\d\to \c$ is a pair $(\m,u)$ where $\m$ is a Hilbert space and $u$ is a map from $\d$ to $\m$ such that, for all $\lam,\mu\in\d$,
\be\label{eqmodelD}
1-\ov{\ph(\mu)}\ph(\lam) = (1-\bar\mu \lam)\ip{u(\lam)}{u(\mu)}_\m,
\ee
where $\ip{\cdot}{\cdot}_\m$ denotes the inner product in $\m$.
A closely related notion is a {\em realization} of a function $\ph$ on $\d$, that is, a formula of the form
\be\label{realize-1}
\ph(\lam) =\alpha + \ip{\lam(1-D\lam)\inv \gamma}{\beta}_\m\qquad  \text{for all} \ \lam\in\d,
\ee
where $\bbm \alpha & 1\otimes \beta\\ \gamma\otimes 1 & D\ebm$ is the matrix of a  unitary operator on $\c\oplus \m$.

The connections between models, realizations and the Schur class are revealed in the following theorem.
\begin{thm}\label{1} Let $\ph$ be a function on $\d$.  The following conditions are equivalent.
\begin{enumerate}[(i)]
\item $\ph \in \ess(\d)$;
\item $\ph$ has a model;
\item $\ph$ has a realization.
\end{enumerate}
\end{thm}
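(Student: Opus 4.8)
The plan is to prove the equivalences by establishing the cycle $(i)\Rightarrow(ii)\Rightarrow(iii)\Rightarrow(i)$, which is the classical route for the Schur class on the disc. This is essentially the one-variable de~Branges--Rovnyak / Agler model theorem, and each implication is standard; the substance lies in the lurking-isometry argument used to pass from a model to a realization.

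First I would prove $(i)\Rightarrow(ii)$. Given $\ph\in\ess(\d)$, the kernel
\[
K(\lam,\mu) \df \frac{1-\ov{\ph(\mu)}\ph(\lam)}{1-\bar\mu\lam}
\]
is positive semidefinite on $\d\times\d$: this follows because $\|\ph\|_\infty\le 1$ forces the Pick-type matrices $\big[(1-\ov{\ph(\mu_j)}\ph(\mu_i))/(1-\bar\mu_j\mu_i)\big]$ to be positive semidefinite, a fact I would obtain from the standard observation that $1/(1-\bar\mu\lam)$ is the Szeg\H{o} kernel of the Hardy space $H^2(\d)$ and that multiplication by a Schur-class $\ph$ is a contraction on $H^2(\d)$. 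I would then take $\m$ to be the reproducing kernel Hilbert space with kernel $K$ and set $u(\lam) \df K(\cdot,\lam)$, so that $\ip{u(\lam)}{u(\mu)}_\m = K(\lam,\mu)$, which is exactly \eqref{eqmodelD}.

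Next, the crux, $(ii)\Rightarrow(iii)$. Rearranging \eqref{eqmodelD} gives the identity
\[
1 + \bar\mu\lam\,\ip{u(\lam)}{u(\mu)}_\m \;=\; \ov{\ph(\mu)}\ph(\lam) + \ip{u(\lam)}{u(\mu)}_\m
\]
for all $\lam,\mu\in\d$. This says that the map sending $\bpm 1 \\ \lam\,u(\lam)\epm$ to $\bpm \ph(\lam) \\ u(\lam)\epm$ extends to an isometry $V$ from the closed span of the left-hand vectors in $\c\oplus\m$ onto the closed span of the right-hand vectors. After enlarging $\m$ if necessary (the usual trick of padding to equalise deficiency indices) I would extend $V$ to a unitary operator on $\c\oplus\m$ and write its block matrix as $\bbm\alpha & 1\otimes\beta \\ \gamma\otimes 1 & D\ebm$. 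Reading off the two scalar and vector equations encoded by $V\bpm 1\\ \lam u(\lam)\epm=\bpm\ph(\lam)\\u(\lam)\epm$ yields $\ph(\lam)=\alpha+\lam\ip{u(\lam)}{\beta}_\m$ and $u(\lam)=\gamma+\lam D u(\lam)$; solving the second for $u(\lam)=(1-D\lam)\inv\gamma$ (valid since $\|D\lam\|<1$ for $\lam\in\d$) and substituting gives precisely the realization \eqref{realize-1}. The main obstacle here is the lurking-isometry step: verifying that $V$ is well defined and isometric on the span, and handling the deficiency-space bookkeeping so that $V$ genuinely extends to a unitary on $\c\oplus\m$ rather than merely a partial isometry.

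Finally $(iii)\Rightarrow(i)$ is a direct computation. Starting from \eqref{realize-1} with the block matrix unitary, I would verify the model identity \eqref{eqmodelD} by expanding $1-\ov{\ph(\mu)}\ph(\lam)$ using the unitarity relations among $\alpha,\beta,\gamma,D$ (namely that the columns and rows of the block matrix are orthonormal); this recovers \eqref{eqmodelD} with $u(\lam)=(1-D\lam)\inv\gamma$, and then setting $\mu=\lam$ shows $1-|\ph(\lam)|^2=(1-|\lam|^2)\|u(\lam)\|^2\ge 0$, so $|\ph(\lam)|\le 1$ and $\ph\in\ess(\d)$. All three implications are classical, so I would keep the exposition brief and concentrate the detail on the lurking-isometry construction in $(ii)\Rightarrow(iii)$.
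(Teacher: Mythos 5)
Your proposal is correct and follows exactly the standard route: the paper does not actually prove Theorem \ref{1} but refers to \cite{amy20}, where the argument is precisely the one you give (positivity of the kernel to get a model, the lurking-isometry/Gramian step to get a realization, and the unitarity computation for the converse). Your lurking-isometry step is also the same technique the paper itself uses in its $r\cdot\mathbb{G}$ analogues, Theorems \ref{realisationformulaforrG} and \ref{converseforrdotg}, so the approaches match in substance.
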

Proofs of the various implications in this theorem can be found, for instance, in \cite{amy20}.
Models and realizations of functions have proved to be a powerful tool for both operator-theorists (e.g. Nagy and Foias \cite{NF}) and control engineers (largely as a tool for computation \cite{GL}).  In this paper we shall derive versions of model and realization formulae which apply  to functions in the ``Schur class" of another domain.  For a domain $\Omega$ in $\c^n$ the Schur class $\mathcal{S}(\Omega)$ is defined to be the set of holomorphic functions $\ph$ on $\Omega$ such that the supremum norm $\|\ph\|_\infty \df \sup_{z\in\Omega}|\ph(z)|$ is at most $1$.  We are concerned with the domain $\Omega =\mathbb{G}_r$ in $\c^2$,
which we now define.

The symmetrized bidisc $\mathbb{G}$ was introduced by Agler and Young  in the course of a study of the spectral Nevanlinna-Pick problem for $2 \times 2$ matrix functions, which is a special case of the ``$\mu$-synthesis problem" in robust control theory \cite{doyle}.  $\mathbb{G}$ is defined by
\be\label{defGr}
\mathbb{G}\stackrel{\rm def}{=}  \Big\{( \lambda_{1}+\lambda_{2} ,\lambda_{1}\lambda_{2}): \lambda_{1}\in \mathbb{D}, \lambda_{2}\in\mathbb{D}\Big\}.
\ee
It is known that $\mathbb{G}$ is hypoconvex, polynomially convex and starlike about $(0,0)$, but not convex, see \cite[Theorem 2.3]{HGSB}. 
Here we study a related region in $\c^2$, to wit,  the region
$$
 \mathbb{G}_{r} = \Big\{( \lambda_{1}+r\lambda_{2} ,r\lambda_{1}\lambda_{2}): \lambda_{1}\in \mathbb{D}, \lambda_{2}\in\mathbb{D}\Big\},
$$
where $0<r<1$.  Since $\mathbb{G}_{r}$ is the image of $\d\times r\d$ under the symmetrization map $(z,w) \mapsto (z+w,zw)$, and $\d\times r\d$ is also a bidisk, arguably $\mathbb{G}_{r}$ also deserves the appellation ``symmetrized bidisc".  However, this name has become firmly associated with the domain $\mathbb{G}$, and so we propose the nomenclature ``symmetrized skew bidisc" for $\mathbb{G}_{r}$, to avoid clashing with established terminology.  $\mathbb{G}_{r}$ is also potentially of interest in connection with the spectral Nevanlinna-Pick problem for $2\times 2$-matrix functions. 
 In a personal communication Lukasz Kosinski pointed out  that $\mathbb{G}_{r}$ is not pseudoconvex. 
 We shall also have occasion to make use of the domain 
\begin{align} \label{defrdotG}
 r \cdot \mathbb{G} &\df \Big\{(r(\lambda_{1}+\lambda_{2}),r^{2}\lambda_{1}\lambda_{2}): \lambda_{1}\in\mathbb{D}, \lambda_{2}\in\mathbb{D}\Big\}\\
 &=\Big\{(rs,r^2p):(s,p)\in\mathbb{G}\Big\}.
\end{align}
In 2017 Agler and Young \cite{AY17} derived a realization formula for any function in $\mathcal{S}(\mathbb{G})$ by means of a symmetrization argument. They introduced the following notion:
\begin{defin}\label{defGmodel}
A $\mathbb{G}$-\emph{model} for a function $\ph$ on $\mathbb{G}$ is a triple $(\m,T,u)$ where $\calm$ is a Hilbert space, $T$ is a contraction acting on $\calm$ and $u:\mathbb{G} \to \m$ is a holomorphic function such that, for all $s,t\in \mathbb{G}$,
\begin{equation}\label{modelform}
 1-\overline{\ph(t)}\ph(s)= \ip{ (1-t_T^* s_T) u(s)}{u(t)}_\calm.
\end{equation}
\end{defin}
Here, for any point $s=(s_1,s_2)\in \mathbb{G}$ and any contractive linear operator $T$ on a Hilbert space $\calm$, the operator $s_T$ is defined by
\begin{equation}\label{defsU}
s_T=(2s_2T-s_1)(2-s_1T)\inv \quad \mbox{ on } \calm.
\end{equation}
A {\em realization} of a function $\ph$ on $\mathbb{G}$ is a formula of the form
\be\label{realize-2}
\ph(s) =\alpha + \ip{s_T(1-D s_T)\inv \gamma}{\beta}_\m\qquad  \text{for all} \ s \in \mathbb{G},
\ee
where $\bbm \alpha & 1\otimes \beta\\ \gamma\otimes 1 & D\ebm$ is the matrix of a  unitary operator on $\c\oplus \m$ and $T$ is a contraction on $\calm$.

In \cite[Theorem 2.2 and Theorem 3.1]{AY17} Agler and Young proved the following statement.
\begin{thm}\label{modelGthm}
Let $\ph$ be a function on $\mathbb{G}$.  The following three statements are equivalent.
\begin{enumerate}
\item   $\ph\in\ess(\mathbb{G})$;
\item $\ph$ has a $\mathbb{G}$-model $(\calm, T, u)$ in which $T$ is a unitary operator on $\calm$;
\item $\ph$ has a realization.
\end{enumerate}
\end{thm}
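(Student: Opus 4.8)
The plan is to prove the cycle of implications (iii) $\Rightarrow$ (i) $\Rightarrow$ (ii) $\Rightarrow$ (iii), in direct analogy with the disc case of Theorem~\ref{1}, with the Möbius-type operators $s_T$ of equation~\eqref{defsU} playing the role that the scalar $\lam$ plays on $\d$. Before starting I would record the basic lemma that for $s\in\mathbb{G}$ and any contraction $T$ on $\m$ the operator $2-s_1T$ is invertible (since $|s_1|<2$ and $\|T\|\le 1$) and $\|s_T\|\le 1$; this is the operator-theoretic reflection of the ``magic function'' characterisation of $\mathbb{G}$, and it guarantees that every expression appearing in \eqref{modelform} and \eqref{realize-2} is well defined.

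The two easier directions are essentially scalar. For (iii) $\Rightarrow$ (i) I would run the realization computation backwards: writing $u(s)=(1-Ds_T)\inv\gamma$ and using only that $\bbm \alpha & 1\otimes\beta\\ \gamma\otimes 1 & D\ebm$ is unitary, a routine rearrangement of \eqref{realize-2} recovers the model identity $1-\ov{\ph(t)}\ph(s)=\ip{(1-t_T^*s_T)u(s)}{u(t)}_\m$; setting $t=s$ gives $1-|\ph(s)|^2=\|u(s)\|^2-\|s_Tu(s)\|^2\ge 0$ because $\|s_T\|\le 1$, so $\ph\in\ess(\mathbb{G})$. For (ii) $\Rightarrow$ (iii) I would use a lurking-isometry argument. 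Rearranging \eqref{modelform} to
\begin{equation*}
\ip{(1,\,s_Tu(s))}{(1,\,t_Tu(t))}_{\c\oplus\m}=\ip{(\ph(s),\,u(s))}{(\ph(t),\,u(t))}_{\c\oplus\m}
\end{equation*}
shows that the assignment $(1,s_Tu(s))\mapsto(\ph(s),u(s))$ extends to an isometry on the closed span of the vectors on the left; extending it to a unitary $\bbm \alpha & 1\otimes\beta\\ \gamma\otimes 1 & D\ebm$ on $\c\oplus\m$ (enlarging $\m$ if necessary) and reading off the two rows yields $u(s)=(1-Ds_T)\inv\gamma$ and then the realization \eqref{realize-2}. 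Unitarity of $T$ is not needed here.

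The substantive direction is (i) $\Rightarrow$ (ii), and the strategy is symmetrization. First pull $\ph$ back along $\pi(\lam_1,\lam_2)=(\lam_1+\lam_2,\lam_1\lam_2)$ to obtain $\psi\df\ph\circ\pi$ on $\dtwo$; since $\pi$ maps $\dtwo$ onto $\mathbb{G}$ we have $\psi\in\ess(\dtwo)$, and $\psi$ is symmetric, $\psi(\lam_1,\lam_2)=\psi(\lam_2,\lam_1)$. Next invoke the bidisc model theorem (And\^o's inequality / Agler's decomposition, see \cite{amy20}) to write
\begin{equation*}
1-\ov{\psi(\mu)}\psi(\lam)=(1-\ov{\mu_1}\lam_1)\ip{v_1(\lam)}{v_1(\mu)}+(1-\ov{\mu_2}\lam_2)\ip{v_2(\lam)}{v_2(\mu)}
\end{equation*}
for holomorphic $v_1,v_2$ into Hilbert spaces $\h_1,\h_2$. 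Then I would symmetrise: averaging over the swap $\sigma(\lam_1,\lam_2)=(\lam_2,\lam_1)$ and using the symmetry of $\psi$ lets one take $\h_1=\h_2=\m$ and realise the interchange of the two kernels by a unitary involution $T$ on $\m$. The heart of the argument is the algebraic identity, in the symmetric functions $s=\pi(\lam)$, $t=\pi(\mu)$, that converts the two-term right-hand side into the single term $\ip{(1-t_T^*s_T)u(s)}{u(t)}_\m$ with $s_T$ as in \eqref{defsU}; this is exactly the point at which the definition of $s_T$ is engineered to work, and it produces a $\mathbb{G}$-model $(\m,T,u)$. Because $T$ implements an order-two symmetry it is automatically unitary, so no dilation step is required.

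The main obstacle is thus the symmetrisation computation in (i) $\Rightarrow$ (ii): arranging the averaging so that the two bidisc component spaces and kernels match up under $\sigma$, and verifying the magic identity that turns $(1-\ov{\mu_1}\lam_1)\langle\cdots\rangle+(1-\ov{\mu_2}\lam_2)\langle\cdots\rangle$ into $\ip{(1-t_T^*s_T)u(s)}{u(t)}_\m$. Everything else reduces either to the scalar disc theory of Theorem~\ref{1} or to bookkeeping with the resolvent $(2-s_1T)\inv$. This symmetrisation is precisely the argument carried out by Agler and Young in \cite{AY17}.
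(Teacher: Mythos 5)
Your overall route --- the cycle (iii)$\Rightarrow$(i)$\Rightarrow$(ii)$\Rightarrow$(iii), with the hard direction (i)$\Rightarrow$(ii) done by pulling back along $\pi$, applying Agler's bidisc theorem, and symmetrising --- is exactly the Agler--Young argument that the paper cites for this theorem and reproduces for $\mathbb{G}_r$ in Theorem \ref{modelformulaforGr}; your two easier implications are also the standard ones (compare Theorems \ref{realisationformulaforrG} and \ref{converseforrdotg}). However, the key mechanism in the hard direction is misidentified, and as you state it the step fails. The unitary $T$ of the $\mathbb{G}$-model is \emph{not} ``a unitary involution realising the interchange of the two kernels,'' and it is not ``automatically unitary because it implements an order-two symmetry.'' What symmetrisation actually yields, after averaging the bidisc model over $\sigma$ and setting $v(\lambda)=\tfrac{1}{\sqrt 2}\,(u_1(\lambda),u_2(\lambda^{\sigma}))$, is the Gramian identity
\[
\big\langle v(\lambda)-v(\lambda^{\sigma}),\,v(\mu)-v(\mu^{\sigma})\big\rangle
=\big\langle \lambda_1 v(\lambda)-\lambda_2 v(\lambda^{\sigma}),\,\mu_1 v(\mu)-\mu_2 v(\mu^{\sigma})\big\rangle,
\]
and hence a \emph{lurking isometry} $L:\lambda_1 v(\lambda)-\lambda_2 v(\lambda^{\sigma})\mapsto v(\lambda)-v(\lambda^{\sigma})$ between the closed spans of these two families. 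This $L$ depends on the model vectors of $\ph$, not on the coordinate swap; there is no reason for it to be an involution; and it is defined only between two subspaces of $\h$, so the extension step you declare unnecessary is in fact essential: one must enlarge $\h$ to a space $\m$ and extend $L$ to a unitary $T$ on $\m$ --- this is precisely where the unitarity asserted in statement (ii) comes from. Only then does one obtain the resolvent identity $(1-\lambda_1 T)v(\lambda)=(1-\lambda_2 T)v(\lambda^{\sigma})$, define $w(\lambda)=(1-\lambda_2 T)^{-1}v(\lambda)$, check $w(\lambda^{\sigma})=w(\lambda)$ so that $w$ descends through $\pi$ to a map $u$ on $\mathbb{G}$, and verify the algebraic identity you call the ``magic identity.''

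If instead $T$ is taken to be the coordinate swap on $\mathcal{K}\oplus\mathcal{K}$, or any fixed involution chosen independently of $\ph$, nothing forces $(1-\lambda_1 T)v(\lambda)=(1-\lambda_2 T)v(\lambda^{\sigma})$, and without that intertwining the conversion of the two-kernel sum into $\langle(1-t_T^{*}s_T)u(s),u(t)\rangle$ has no chance of holding: the conversion uses exactly the fact that $T$ maps $\lambda_1 v(\lambda)-\lambda_2 v(\lambda^{\sigma})$ to $v(\lambda)-v(\lambda^{\sigma})$. Note also that the paper itself does not prove Theorem \ref{modelGthm} --- it quotes it from \cite{AY17} --- but its proof of the $\mathbb{G}_r$ analogue, Theorem \ref{modelformulaforGr}, displays this lurking-isometry-plus-unitary-extension mechanism explicitly in equations \eqref{isometryonthegramian}--\eqref{equalitybetweenextension}; that is the argument your sketch needs in place of the involution claim.
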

To study $\mathbb{G}_{r}$, we define the involution $\sigma$ on $\mathbb{C}^{2}$ by 
\begin{equation}\label{invol-sigma-intro}
 \lambda^{\sigma} = (r\lambda_{2},r^{-1}\lambda_{1})~ \text{for all}~ \lambda= (\lambda_1, \lambda_2) \in \mathbb{C}^{2}.
\end{equation}
We perform  a symmetrization argument on $\d^2$ using the involution $\sigma$
to obtain a model formula for $\mathbb{G}_{r}$ in Theorem \ref{modelformulaforGr} and Theorem \ref{modelforsur}. To state the formulae we require the following notation.

\begin{defin}\label{sUR-def-intro}
Let $r\in(0,1)$, let $\mathcal{M}$ be a complex Hilbert space, let $\mathcal{H}_{1}$ be a closed non-trivial proper subspace of $\mathcal{M}$,  and let $U$ be a unitary operator on $\mathcal{M}$.  We define $\mathcal{R}$ in $\mathcal{B(M)}$ by the formula 
\begin{equation} \label{RonM-intro}
\mathcal{R}=\begin{bmatrix}
	1_{\mathcal{H}_{1}  } 	& 0                \\
	0 		& r\cdot 1_{\mathcal{H}_{1}^{\perp}}  	\\
\end{bmatrix}\in\mathcal{B(M)}.
\end{equation}
For $s=(s_{1},s_{2})\in r\cdot\mathbb{G}$, we define $s_{U,\mathcal{R}}\in\mathcal{B(M)}$  by
\begin{equation}\label{sUR-form-intro}
s_{U,\mathcal{R}} = \bigg(2s_{2}\mathcal{R}^{-1}U-s_{1}\bigg)\bigg(2\mathcal{R}-s_{1}U\bigg)^{-1}.
\end{equation}
\end{defin}

\begin{remark} Let $r\in(0,1)$.
The relation between the operator $s_{U,\mathcal{R}}\in\mathcal{B(M)}$ given by equation \eqref{sUR-form-intro}
and  the operator $s_T\in\mathcal{B(M)}$ given by equation \eqref{defsU} is the following.
For $s=(s_{1},s_{2})\in r\cdot\mathbb{G}$,
\begin{equation} \label{sUR-sT}
s_{U,\mathcal{R}} = s_{\mathcal{R}^{-1}U}\mathcal{R}^{-1}.
\end{equation}
Note that $\|\mathcal{R}^{-1}U\| = r^{-1}$, and so $\mathcal{R}^{-1}U$ is not a contraction, but 
one can check that, for $s=(s_{1},s_{2})\in r\cdot\mathbb{G}$, the operator $s_{\mathcal{R}^{-1}U}$ is still well defined.
\end{remark}

We prove the following results in Lemma \ref{sURanalyticlemma}.
\begin{lem}\label{sURanalyticlemma-intro} Let $r\in(0,1)$, let $\mathcal{M}$ be a complex Hilbert space, let $\mathcal{H}_{1}$ be a closed non-trivial proper subspace of $\mathcal{M}$, let 
the operator $\mathcal{R} \in\mathcal{B(M)}$ be defined by equation \eqref{RonM-intro} and $U$ be a unitary operator on $\mathcal{M}$.
\begin{enumerate}
    \item The operator-valued function 
     $$w:r\cdot\mathbb{G}\rightarrow \mathcal{B(M)} : s\mapsto s_{U,\mathcal{R}},$$ 
where $s_{U,\mathcal{R}}\in\mathcal{B(M)}$ is given by equation \eqref{sUR-form-intro},   
is well defined and holomorphic on $r\cdot\mathbb{G}$;
    \item $\|s_{U,\mathcal{R}}\|_{\mathcal{B(M)}}< 1$ for all $s=(s_{1},s_{2})\in r\cdot\mathbb{G}$.
\end{enumerate}
\end{lem}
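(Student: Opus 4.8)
The plan is to reduce everything to the established theory of the symmetrized bidisc $\mathbb{G}$ by means of the identity $s_{U,\mathcal{R}} = s_{\mathcal{R}\inv U}\mathcal{R}\inv$ recorded in \eqref{sUR-sT}, the essential point being to extract from $\mathcal{R}\inv U$ (whose norm is $r\inv > 1$) a genuine contraction. I would first recall the standard fact from the $\mathbb{G}$-theory of Agler and Young \cite{AY17}: if $\sigma=(\sigma_1,\sigma_2)\in\mathbb{G}$ and $W$ is \emph{any} contraction on a Hilbert space, then $2-\sigma_1 W$ is invertible and $\sigma_W \df (2\sigma_2 W-\sigma_1)(2-\sigma_1 W)\inv$ satisfies $\norm{\sigma_W}<1$. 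The invertibility holds because $|\sigma_1|<2$ on $\mathbb{G}$, so $\norm{\sigma_1 W/2}<1$; the norm bound follows by applying von Neumann's inequality to the disc-algebra function $z\mapsto (2\sigma_2 z-\sigma_1)(2-\sigma_1 z)\inv$, which maps $\ov{\mathbb{D}}$ into $\mathbb{D}$ precisely because $\sigma\in\mathbb{G}$.

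Next, given $s=(s_1,s_2)\in r\cdot\mathbb{G}$, by the definition \eqref{defrdotG} I would set $\sigma\df(s_1/r,\,s_2/r^2)\in\mathbb{G}$ and introduce the operator $W\df r\mathcal{R}\inv U$. In the orthogonal decomposition $\mathcal{M}=\mathcal{H}_1\oplus\mathcal{H}_1^\perp$ one has $r\mathcal{R}\inv=\mathrm{diag}(r\cdot 1_{\mathcal{H}_1},\,1_{\mathcal{H}_1^\perp})$, so $\norm{r\mathcal{R}\inv}=1$ and hence $\norm{W}=\norm{r\mathcal{R}\inv U}=\norm{r\mathcal{R}\inv}=1$ since $U$ is unitary; thus $W$ is a contraction. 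A direct substitution using $s_1=r\sigma_1$, $s_2=r^2\sigma_2$ gives $s_1\mathcal{R}\inv U=\sigma_1 W$ and $2s_2\mathcal{R}\inv U-s_1=r(2\sigma_2 W-\sigma_1)$, whence $s_{\mathcal{R}\inv U}=r\,\sigma_W$. By the fact recalled above this is well defined, and therefore $2\mathcal{R}-s_1 U=\mathcal{R}(2-\sigma_1 W)$ is invertible; this establishes the well-definedness asserted in part (1). Holomorphy on $r\cdot\mathbb{G}$ is then immediate, since $s\mapsto s_{U,\mathcal{R}}$ is assembled from operator-affine functions of $s$ together with the inversion of the everywhere-invertible holomorphic family $2\mathcal{R}-s_1 U$.

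For the norm bound in part (2) I would combine \eqref{sUR-sT} with the computation above to write $s_{U,\mathcal{R}}=s_{\mathcal{R}\inv U}\mathcal{R}\inv=r\sigma_W\mathcal{R}\inv=\sigma_W\,(r\mathcal{R}\inv)$. Since $\norm{\sigma_W}<1$ and $\norm{r\mathcal{R}\inv}=1$, submultiplicativity yields $\norm{s_{U,\mathcal{R}}}\le\norm{\sigma_W}\,\norm{r\mathcal{R}\inv}=\norm{\sigma_W}<1$, as required.

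The step I expect to be the crux is obtaining the strict inequality rather than merely $\norm{s_{U,\mathcal{R}}}\le 1$. Applying \eqref{sUR-sT} with the crude bounds $\norm{s_{\mathcal{R}\inv U}}\le r$ and $\norm{\mathcal{R}\inv}=r\inv$ gives only $\norm{s_{U,\mathcal{R}}}\le 1$. The resolution is to absorb the scalar $r$ into $\mathcal{R}\inv$, so that the extra factor is $r\mathcal{R}\inv=\mathrm{diag}(r,1)$, a contraction of norm exactly $1$; the strictness then comes entirely from the $\mathbb{G}$-theoretic bound $\norm{\sigma_W}<1$. Identifying the correct contraction $W=r\mathcal{R}\inv U$, rather than the non-contractive $\mathcal{R}\inv U$, is the one genuinely non-routine move; the remainder is bookkeeping.
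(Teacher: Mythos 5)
Your proposal is correct and follows essentially the same route as the paper: the paper's proof also factors out the contraction $r\mathcal{R}^{-1}U$ (writing $s_{U,\mathcal{R}}$ as a linear fractional function $f_q$ of that contraction, composed with the norm-one operator $r\mathcal{R}^{-1}$) and then obtains strictness of the norm bound from von Neumann's inequality together with the characterization of $\mathbb{G}$ in \cite[Theorem 2.1]{HGSB}, which is exactly the content of the ``standard fact'' $\norm{\sigma_W}<1$ that you quote from the Agler--Young theory. Your treatment of well-definedness and holomorphy (invertibility of $2\mathcal{R}-s_1U$ via a Neumann series bound, plus holomorphy of operator inversion) likewise matches the paper's argument.
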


\begin{thm}\label{modelforsur-intro}
Let $r\in(0,1)$ and let $f\in\mathcal{S}(\mathbb{G}_{r})$. Then there exists a model $(\mathcal{M}, (U,\mathcal{R}), u)$ for $f$ on $r\cdot \mathbb{G}$, that is, there exist a complex Hilbert space $\mathcal{M}$,  a closed non-trivial proper subspace  $\mathcal{H}_{1}$ of $\mathcal{M}$, a holomorphic map $u:r\cdot \mathbb{G}\rightarrow \mathcal{M}$, a unitary operator $U$ on $\mathcal{M}$ and 
the operator $\mathcal{R} \in\mathcal{B(M)}$ given by equation \eqref{RonM-intro},
such that, for all $s=(s_{1},s_{2})\in r\cdot\mathbb{G}$ and $t=(t_{1},t_{2})\in r \cdot \mathbb{G}$,
\begin{equation}\label{grcorollary-intro}
1- \overline{f(t)}f(s) =\Bigg\langle  \bigg(1_{\mathcal{M}}-t_{U,\mathcal{R}}^{*}s_{U,\mathcal{R}}\bigg)u(s),u(t)\Bigg\rangle_{\mathcal{M}},
  \end{equation}
where the operators $s_{U,\mathcal{R}}$ and $t_{U,\mathcal{R}}$ are defined by 
equation \eqref{sUR-form-intro}.
\end{thm}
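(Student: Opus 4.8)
The plan is to model $f$ on the open subset $r\cdot\mathbb{G}$ of $\mathbb{G}_r$ — note that $r\cdot\mathbb{G}\subset\mathbb{G}_r$, since by \eqref{defrdotG} it is the symmetrization of $r\mathbb{D}\times r\mathbb{D}\subset\mathbb{D}\times r\mathbb{D}$ — by pulling $f$ back through symmetrization, invoking the classical Agler model theorem on a bidisc, and then folding the two resulting kernels together using $\sigma$. Write $q(\lambda)=(\lambda_1+r\lambda_2,\,r\lambda_1\lambda_2)$, the quotient map for the involution $\sigma$ of \eqref{invol-sigma-intro}, so that $q\circ\sigma=q$ on $\mathbb{C}^2$ and $q$ carries the $\sigma$-invariant bidisc $r\mathbb{D}\times\mathbb{D}$ onto $r\cdot\mathbb{G}$. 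First I would set $F\df f\circ q$ on $r\mathbb{D}\times\mathbb{D}$; then $\|F\|_\infty\le 1$, so $F\in\mathcal{S}(r\mathbb{D}\times\mathbb{D})$, and $F\circ\sigma=F$ because $q\circ\sigma=q$. Applying Agler's model theorem for the bidisc to $F$ then produces positive kernels $K_1,K_2$ with
\[
1-\overline{F(\mu)}F(\lambda)=\Big(1-\frac{\overline{\mu_1}\lambda_1}{r^2}\Big)K_1(\lambda,\mu)+\big(1-\overline{\mu_2}\lambda_2\big)K_2(\lambda,\mu),
\]
the two coefficients being the Szeg\H{o} kernels of the discs $r\mathbb{D}$ and $\mathbb{D}$.

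Next I would symmetrize. Under $\sigma$ the two displayed coefficients are interchanged, so replacing $K_i$ by $\tfrac12\big(K_i(\lambda,\mu)+K_{3-i}(\sigma\lambda,\sigma\mu)\big)$ produces Agler kernels, still representing $1-\overline{F(\mu)}F(\lambda)$, which now satisfy $K_1(\lambda,\mu)=K_2(\sigma\lambda,\sigma\mu)$. Writing $K_i(\lambda,\mu)=\langle u_i(\lambda),u_i(\mu)\rangle_{\mathcal{M}_i}$ on minimal spaces $\mathcal{M}_i$, this symmetry yields a unitary $V:\mathcal{M}_2\to\mathcal{M}_1$ with $u_1(\lambda)=Vu_2(\sigma\lambda)$. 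I then put $\mathcal{M}=\mathcal{M}_1\oplus\mathcal{M}_2$, take $\mathcal{H}_1=\mathcal{M}_1$ so that the operator $\mathcal{R}$ of \eqref{RonM-intro} is $1$ on $\mathcal{M}_1$ and $r$ on $\mathcal{M}_2$ (its two eigenvalues recording the two disc radii), and let $U$ be the block-swap unitary $U(\xi_1\oplus\xi_2)=V\xi_2\oplus V^{-1}\xi_1$ that implements $\sigma$. A direct check gives $\|\mathcal{R}^{-1}U\|=r^{-1}$ and the relation \eqref{sUR-sT}, confirming that the skewness of $\mathbb{G}_r$ is recorded in the non-scalar $\mathcal{R}$ — the feature absent from the single unitary $T$ of Theorem \ref{modelGthm}.

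The decisive step converts the two Agler terms into \eqref{grcorollary-intro}. Put $P(s)=2\mathcal{R}-s_1U$ and $N(s)=2s_2\mathcal{R}^{-1}U-s_1$, so that $s_{U,\mathcal{R}}=N(s)P(s)^{-1}$ by \eqref{sUR-form-intro}; then
\[
1-t_{U,\mathcal{R}}^{*}s_{U,\mathcal{R}}=P(t)^{-*}\big(P(t)^{*}P(s)-N(t)^{*}N(s)\big)P(s)^{-1}.
\]
The model map $u$ is obtained by transforming the assembled vector $u_1(\lambda)\oplus u_2(\lambda)$ by this pencil and its inverse — the lurking-isometry device used for $\mathbb{G}$ in \cite{AY17} — the point being that $\langle(1-t_{U,\mathcal{R}}^{*}s_{U,\mathcal{R}})u(s),u(t)\rangle$ reduces to a pairing against $M\df P(t)^{*}P(s)-N(t)^{*}N(s)$. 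Using the block form of $U$ and $\mathcal{R}$ one computes $U^{*}\mathcal{R}^{-2}U=\operatorname{diag}(r^{-2},1)$, so the diagonal part of $M$ is $4\operatorname{diag}\big(1-r^{-2}\overline{t_2}s_2,\ r^2-\overline{t_2}s_2\big)$; substituting $s_1=\lambda_1+r\lambda_2,\ s_2=r\lambda_1\lambda_2$ (and likewise at $t$) is what reorganizes the remaining $s_1$- and $\overline{t_1}$-linear, off-diagonal contributions of $M$ into the separated Szeg\H{o} variables of the two factors.

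The hard part will be precisely this operator identity: showing that, after the pencil transformation, $M$ recombines with no remainder into the two symmetrized Agler terms, i.e.\ that the off-diagonal blocks are exactly absorbed by the substitution $s=q(\lambda)$. Holomorphy and boundedness of $u$ on $r\cdot\mathbb{G}$ then follow from Lemma \ref{sURanalyticlemma-intro}, and the value of $\langle(1-t_{U,\mathcal{R}}^{*}s_{U,\mathcal{R}})u(s),u(t)\rangle$ is independent of the chosen preimage $\lambda\in q^{-1}(s)$ because the whole construction is $\sigma$-equivariant ($u_1(\lambda)\oplus u_2(\lambda)$ transforms by $U$ under $\sigma$, while $s=q(\lambda)$ is $\sigma$-invariant). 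Two subsidiary points remain: that $U$ is genuinely unitary, which uses the minimality of the $\mathcal{M}_i$ and the relation $u_1=Vu_2\circ\sigma$, exactly as the unitarity of $T$ is arranged in Theorem \ref{modelGthm}; and that $\mathcal{H}_1=\mathcal{M}_1$ is non-trivial and proper, which holds for non-constant $f$ since $\sigma$-symmetry forbids either Agler kernel from vanishing, the degenerate cases being handled by an inert padding of the $\mathcal{M}_i$ that alters neither $s_{U,\mathcal{R}}$ nor the identity \eqref{grcorollary-intro}.
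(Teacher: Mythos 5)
Your outline does follow the paper's broad strategy (pull $f$ back to a $\sigma$-symmetric function on a bidisc, apply Agler's theorem, symmetrize, and then factor an operator pencil to reach \eqref{grcorollary-intro}), and several of your steps are sound: the symmetrization of the two kernels, the unitary $V$ obtained from minimality, the computation $U^{*}\mathcal{R}^{-2}U=\mathrm{diag}(r^{-2},1)$, and the algebraic identity $1-t_{U,\mathcal{R}}^{*}s_{U,\mathcal{R}}=P(t)^{-*}\big(P(t)^{*}P(s)-N(t)^{*}N(s)\big)P(s)^{-1}$. The gap is your choice of $U$. In the paper (as in \cite{AY17}) the unitary appearing in the model is \emph{not} an operator implementing the involution: it is manufactured by a lurking-isometry argument from a Gramian identity that encodes the function itself --- $U$ extends the isometry sending $\widetilde{\mathcal{R}}^{-1}\big(\lambda_{1}v(\lambda)-r\lambda_{2}v(\lambda^{\sigma})\big)$ to $v(\lambda)-v(\lambda^{\sigma})$ --- and therefore satisfies the intertwining relation \eqref{equalitybetweenextension}, namely $(1_{\mathcal{M}}-\lambda_{1}U\mathcal{R}^{-1})v(\lambda)=(1_{\mathcal{M}}-r\lambda_{2}U\mathcal{R}^{-1})v(\lambda^{\sigma})$, \emph{by construction}. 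It is this relation, and not the mere symmetry $v\circ\sigma=Uv$, that makes the pencil transform of the assembled vector a function of $s$ alone and makes the pencil close up with no remainder.

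Your block-swap $U$ cannot satisfy that relation except degenerately. Write $v(\lambda)=u_{1}(\lambda)\oplus u_{2}(\lambda)$ with $u_{1}=Vu_{2}\circ\sigma$ and compute both sides of \eqref{equalitybetweenextension} blockwise for the swap: the two components give the scalar conditions
\[
(1+\lambda_{2})\,u_{2}(\sigma\lambda)=(1+r^{-1}\lambda_{1})\,u_{2}(\lambda),
\qquad
(1+\lambda_{1})\,u_{2}(\sigma\lambda)=(1+r\lambda_{2})\,u_{2}(\lambda),
\]
and eliminating $u_{2}(\sigma\lambda)$ yields $(\lambda_{1}-r\lambda_{2})(1+r+\lambda_{1}+r\lambda_{2})\,u_{2}(\lambda)=0$; since $|\lambda_{1}+r\lambda_{2}|<2r<1+r$, this forces $u_{2}\equiv 0$, i.e.\ $F$ a unimodular constant. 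Consequently, for non-constant $f$ your model map is not well defined: $\sigma$-equivariance of $v$ would give independence of the choice of preimage only if the transforming operator function $A$ satisfied $A(\sigma\lambda)U=A(\lambda)$, which fails for the swap (for the natural pencil $A(\lambda)=(2-s_{1}U\mathcal{R}^{-1})(1-r\lambda_{2}U\mathcal{R}^{-1})^{-1}$ this is exactly the failed relation above). So the ``hard part'' you defer is not merely hard; along this route it is false. The repair is to let the function choose the unitary: either follow the paper's Gramian argument (proof of Theorem \ref{modelformulaforGr}), or, staying inside your own symmetrized-kernel setup, derive from $\tilde K_{1}(\lambda,\mu)=\tilde K_{2}(\sigma\lambda,\sigma\mu)$ the Gramian identity between the families $\{u_{2}(\sigma\lambda)-u_{2}(\lambda)\}$ and $\{r^{-1}\lambda_{1}u_{2}(\sigma\lambda)-\lambda_{2}u_{2}(\lambda)\}$, extend the resulting isometry to a unitary $W$, and set $g=(1-r^{-1}\lambda_{1}W)^{-1}u_{2}$, which \emph{is} $\sigma$-invariant; this produces \eqref{grcorollary-intro} with $\mathcal{R}=r\cdot 1$, which you can then pad by an inert direct summand on which $\mathcal{R}=1$ (block-diagonal $U$, $u$ vanishing there) so that $\mathcal{H}_{1}$ is non-trivial and proper as \eqref{RonM-intro} requires. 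Note that this corrected version of your route yields an essentially scalar $\mathcal{R}$, whereas the paper's choice of applying Agler's theorem on all of $\mathbb{D}^{2}$ (rather than on $r\mathbb{D}\times\mathbb{D}$) is what produces a genuinely two-block $\mathcal{R}$.
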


Note that the model formula of a function $f\in\mathcal{S}(\mathbb{G}_{r})$ is similar to the model formula 
\eqref{defGmodel} of a function 
$f\in\mathcal{S}(\mathbb{G})$ except that the operators $s_U, t_U$  are replaced by the operators $s_{U,\mathcal{R}}$ and $t_{U,\mathcal{R}}$ respectively, where $R \in \mathcal{B(M)}$ given by equation \eqref{RonM-intro}. 

We  prove in Theorem \ref{realisationformulaforrG} a realization formula for functions in $\mathcal{S}(\mathbb{G}_{r})$.
Let us state this result. 
\begin{thm}\label{theoremin-intro}
Let $ r\in (0,1)$ and $f \in \mathcal{S}(\mathbb{G}_{r})$. There exist a scalar $a\in\mathbb{C}$,  a complex Hilbert space $\mathcal{M}$, vectors $\beta, \gamma, \in \mathcal{M}$, a closed non-trivial proper subspace $\mathcal{H}_{1}$ of $\mathcal{M}$ and linear operators $D,U$ on $\mathcal{M}$ such that $D$ is a contraction, $U$ is unitary such that the operator 
\begin{equation}
L=\begin{bmatrix}
    a       & 1\otimes \beta \\
    \gamma \otimes 1       & D 
\end{bmatrix}
\end{equation}
is unitary on $\mathbb{C}\oplus \mathcal{M}$ and, for all $s=(s_{1},s_{2})\in r\cdot\mathbb{G}$,
$$f(s)=a+\langle s_{U,\mathcal{R}}(1-Ds_{U,\mathcal{R}})^{-1}\gamma, \beta \rangle_{\mathcal{M}},$$
where  the operator $s_{U,\mathcal{R}}$ is defined by 
equation \eqref{sUR-form-intro} and the operator $\mathcal{R} \in\mathcal{B(M)}$ given by equation \eqref{RonM-intro}.
\end{thm}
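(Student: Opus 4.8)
The plan is to deduce the realization formula from the model formula of Theorem~\ref{modelforsur-intro} by the classical ``lurking isometry'' argument. First I would apply Theorem~\ref{modelforsur-intro} to $f\in\mathcal{S}(\mathbb{G}_{r})$ to obtain a model $(\mathcal{M},(U,\mathcal{R}),u)$ for $f$ on $r\cdot\mathbb{G}$, with $\mathcal{R}$ as in \eqref{RonM-intro} relative to some closed non-trivial proper subspace $\mathcal{H}_{1}$. Expanding the right-hand side of \eqref{grcorollary-intro} and transposing the terms involving $s_{U,\mathcal{R}}$ and $t_{U,\mathcal{R}}$ yields the equivalent identity
\[
\overline{f(t)}f(s)+\langle u(s),u(t)\rangle_{\mathcal{M}}=1+\langle s_{U,\mathcal{R}}\,u(s),\,t_{U,\mathcal{R}}\,u(t)\rangle_{\mathcal{M}}\qquad(s,t\in r\cdot\mathbb{G}).
\]
This says exactly that, in the Hilbert space $\mathbb{C}\oplus\mathcal{M}$, the two families of vectors $\bigl\{\begin{pmatrix}1\\ s_{U,\mathcal{R}}u(s)\end{pmatrix}\bigr\}$ and $\bigl\{\begin{pmatrix}f(s)\\ u(s)\end{pmatrix}\bigr\}$, indexed by $s\in r\cdot\mathbb{G}$, have identical Gram matrices.

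Next I would define a map $V$ on the closed linear span $\mathcal{D}$ of $\bigl\{\begin{pmatrix}1\\ s_{U,\mathcal{R}}u(s)\end{pmatrix}:s\in r\cdot\mathbb{G}\bigr\}$ in $\mathbb{C}\oplus\mathcal{M}$ by setting
\[
V\begin{pmatrix}1\\ s_{U,\mathcal{R}}u(s)\end{pmatrix}=\begin{pmatrix}f(s)\\ u(s)\end{pmatrix}
\]
and extending by linearity. The equality of Gram matrices above guarantees both that $V$ is well defined (any null combination of domain vectors maps to a vector of norm zero) and that $V$ is isometric, so it extends to an isometry from $\mathcal{D}$ into $\mathbb{C}\oplus\mathcal{M}$. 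The main obstacle is the passage from this isometry to a \emph{unitary} $L$ of the required block form, since the defect spaces $\mathcal{D}^{\perp}$ and $(\ran V)^{\perp}$ need not have equal dimension. I would resolve this by enlarging the state space: replace $\mathcal{M}$ by $\mathcal{M}\oplus\mathcal{E}$ for a suitable Hilbert space $\mathcal{E}$, extend $U$ to a unitary $U\oplus U'$ and extend $\mathcal{R}$ to $\mathcal{R}\oplus\mathcal{R}'$, where $\mathcal{R}'$ again has the block form \eqref{RonM-intro} relative to a splitting of $\mathcal{E}$; the new distinguished subspace $\mathcal{H}_{1}\oplus\mathcal{H}_{1}'$ remains closed, non-trivial and proper. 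Because \eqref{sUR-form-intro} is built from products and inverses of operators that are block-diagonal with respect to $\mathcal{H}_{1}\oplus\mathcal{H}_{1}^{\perp}$, the enlarged operator $s_{U\oplus U',\mathcal{R}\oplus\mathcal{R}'}$ is block-diagonal and restricts to $s_{U,\mathcal{R}}$ on the original $\mathcal{M}$; since $u(s)$ lives in the original $\mathcal{M}$, the model vectors are unchanged. A standard dilation then extends $V$ to a unitary $L$ on $\mathbb{C}\oplus(\mathcal{M}\oplus\mathcal{E})$, which I relabel so that $\mathcal{M}$ now denotes the enlarged space.

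Writing $L=\begin{pmatrix}a&1\otimes\beta\\ \gamma\otimes 1&D\end{pmatrix}$ on $\mathbb{C}\oplus\mathcal{M}$, its entries supply the scalar $a\in\mathbb{C}$, the vectors $\beta,\gamma\in\mathcal{M}$ and the operator $D$, with $D$ automatically a contraction as a compression of the unitary $L$. Applying $L$ to $\begin{pmatrix}1\\ s_{U,\mathcal{R}}u(s)\end{pmatrix}$ and equating to $\begin{pmatrix}f(s)\\ u(s)\end{pmatrix}$ gives the two equations $f(s)=a+\langle s_{U,\mathcal{R}}u(s),\beta\rangle_{\mathcal{M}}$ and $u(s)=\gamma+D\,s_{U,\mathcal{R}}u(s)$. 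Setting $v(s)=s_{U,\mathcal{R}}u(s)$, the second equation gives $(1-s_{U,\mathcal{R}}D)\,v(s)=s_{U,\mathcal{R}}\gamma$; here invertibility of $1-s_{U,\mathcal{R}}D$ (and of $1-Ds_{U,\mathcal{R}}$) is assured by the norm bound $\|s_{U,\mathcal{R}}\|<1$ from Lemma~\ref{sURanalyticlemma-intro}(2) together with $\|D\|\le 1$, via the Neumann series. Hence $v(s)=(1-s_{U,\mathcal{R}}D)^{-1}s_{U,\mathcal{R}}\gamma=s_{U,\mathcal{R}}(1-Ds_{U,\mathcal{R}})^{-1}\gamma$ by the resolvent identity $(1-AB)^{-1}A=A(1-BA)^{-1}$, and substituting into the first equation yields
\[
f(s)=a+\langle s_{U,\mathcal{R}}(1-Ds_{U,\mathcal{R}})^{-1}\gamma,\,\beta\rangle_{\mathcal{M}}\qquad(s\in r\cdot\mathbb{G}),
\]
which is the claimed formula and completes the argument.
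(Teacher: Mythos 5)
Your proposal is correct and follows essentially the same route as the paper: apply the model formula of Theorem \ref{modelforsur-intro}, read off the equality of Gramians, invoke the lurking isometry, enlarge the space to extend it to a unitary $L$, and solve the resulting pair of equations using $\|s_{U,\mathcal{R}}\|<1$. The only differences are cosmetic --- you spell out the enlargement of $(\mathcal{M},U,\mathcal{R})$ and the block-diagonal compatibility of $s_{U,\mathcal{R}}$ more carefully than the paper does, and you finish via the resolvent identity $(1-AB)^{-1}A=A(1-BA)^{-1}$ where the paper solves directly for $u(s)=(1-Ds_{U,\mathcal{R}})^{-1}\gamma$.
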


\section{A model formula for the bidisc $\mathbb{D}^{2}$ and relations to  the symmetrized skew bidisc}\label{symmetrizedpolydiscandGr}

As a preliminary to the construction of models of functions on $\mathbb{G}_{r}$, we recall the notion of a Hilbert space model of a function on $\mathbb{D}^{2}$.

\begin{defin}{\normalfont{\cite[Definition 4.18]{amy20}}}\label{modelonbidisc}
 Let $\varphi$ be a function on $\mathbb{D}^{2}$. A pair $(\mathcal{H},u)$ is said to be a model of $\varphi$ if $\mathcal{H}=\mathcal{H}_{1}\oplus\mathcal{H}_{2}$ is a Hilbert space, $\mathcal{H}_{1}$ and $\mathcal{H}_{2}$ are orthogonally complementary subspaces of $\mathcal{H}$ and $u=(u_{1},u_{2})$ is a pair of holomorphic maps from $\mathbb{D}^{2}$ to $\mathcal{H}_{1},\mathcal{H}_{2}$ respectively such that, for all $\lambda=(\lambda_1, \lambda_2), \ \mu= (\mu_1, \mu_2) \in \mathbb{D}^{2}$, 
\begin{equation}\label{modelonbidiscformula}
1-\overline{\varphi(\mu)}\varphi(\lambda) = \big\langle(1-\overline{\mu_{1}}\lambda_{1}) u_{1}({\lambda}),u_{1}({\mu})\big\rangle_{\mathcal{H}_{1}}+\big\langle(1-\overline{\mu_{2}}\lambda_{2}) u_{2}({\lambda}),u_{2}(\mu)\big\rangle_{\mathcal{H}_{2}}.
\end{equation}
\end{defin}

It was proved by Agler in \cite{RepresentationOfCertainHolomorphicFunctions} that any holomorphic function $\varphi: \mathbb{D}^{2} \to \overline{\mathbb{D}}$ has a model.

\begin{thm}{\normalfont{({\bf{Agler}})}}\label{aglerstheorembidisc}
A function $\phi$ on $\mathbb{D}^{2}$ belongs to the Schur class  $\mathcal{S}(\mathbb{D}^{2})$ if and only if $\phi$ has a model. 
\end{thm}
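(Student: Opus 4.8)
The plan is to prove the two implications separately, dispatching the elementary direction first.

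\emph{Model $\Rightarrow$ Schur class.} Suppose $\phi$ has a model $(\mathcal{H},u)$ as in Definition \ref{modelonbidisc}. Putting $\mu=\lambda$ in the model formula \eqref{modelonbidiscformula} gives
\[
1-|\phi(\lambda)|^{2}=(1-|\lambda_{1}|^{2})\,\|u_{1}(\lambda)\|_{\mathcal{H}_{1}}^{2}+(1-|\lambda_{2}|^{2})\,\|u_{2}(\lambda)\|_{\mathcal{H}_{2}}^{2},
\]
and since $|\lambda_{1}|,|\lambda_{2}|<1$ on $\mathbb{D}^{2}$ the right-hand side is non-negative, so $|\phi(\lambda)|\le 1$. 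To confirm that $\phi$ is holomorphic I would fix a point $\mu_{0}$ with $\phi(\mu_{0})\ne 0$ (the case $\phi\equiv 0$ being trivial) and solve \eqref{modelonbidiscformula} for $\phi(\lambda)$; each summand $(1-\overline{(\mu_{0})_{j}}\lambda_{j})\langle u_{j}(\lambda),u_{j}(\mu_{0})\rangle_{\mathcal{H}_{j}}$ on the right is holomorphic in $\lambda$, because $u_{j}$ is holomorphic and pairing against the fixed vector $u_{j}(\mu_{0})$ is a bounded linear functional. Hence $\phi\in\mathcal{S}(\mathbb{D}^{2})$.

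\emph{Schur class $\Rightarrow$ model.} The core is to produce an \emph{Agler decomposition} of $\phi$: positive semi-definite kernels $K_{1},K_{2}\colon\mathbb{D}^{2}\times\mathbb{D}^{2}\to\mathbb{C}$ with
\[
1-\overline{\phi(\mu)}\phi(\lambda)=(1-\overline{\mu_{1}}\lambda_{1})\,K_{1}(\lambda,\mu)+(1-\overline{\mu_{2}}\lambda_{2})\,K_{2}(\lambda,\mu).
\]
Granting this, the model is assembled routinely: take $\mathcal{H}_{j}$ to be the reproducing kernel Hilbert space of $K_{j}$ and $u_{j}(\lambda)=K_{j}(\cdot,\lambda)$ its kernel section, so that (with the standard sesquilinearity convention) $\langle u_{j}(\lambda),u_{j}(\mu)\rangle_{\mathcal{H}_{j}}$ recovers $K_{j}$, and substitution reproduces \eqref{modelonbidiscformula}. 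One must check that $\lambda\mapsto u_{j}(\lambda)$ is holomorphic; this follows once the kernels are known to be sesquianalytic (holomorphic in $\lambda$, anti-holomorphic in $\mu$), a property one arranges during their construction.

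To obtain the decomposition I would use a Hahn--Banach separation argument on finite sets, followed by a compactness step to globalize. Fix a finite set $F\subset\mathbb{D}^{2}$ and put $M_{j}=[\,1-\overline{\mu_{j}}\lambda_{j}\,]_{\lambda,\mu\in F}$. The matrices of the form $M_{1}\circ A+M_{2}\circ B$ with $A,B\succeq 0$, where $\circ$ denotes the Schur product, form a closed convex cone $C_{F}$ in the self-adjoint matrices indexed by $F$; the goal is to show $[\,1-\overline{\phi(\mu)}\phi(\lambda)\,]_{\lambda,\mu\in F}\in C_{F}$. If it were not, a separating self-adjoint matrix $\Psi$ would satisfy $\overline{M_{j}}\circ\Psi\succeq 0$ for $j=1,2$ yet pair strictly negatively with $[\,1-\overline{\phi(\mu)}\phi(\lambda)\,]$, and a reproducing-kernel estimate exploiting $\|\phi\|_{\infty}\le 1$ would force that pairing to be non-negative, a contradiction. (Equivalently, the finite-dimensional positivity can be read off from And\^o's dilation theorem for a pair of commuting contractions.) Passing from all finite $F$ to the whole bidisc is then a normal-families argument: normalize the finitely supported kernels and extract a convergent subnet. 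The main obstacle is precisely this global existence of $K_{1},K_{2}$; the separation inequality on finite sets and the limiting argument are the delicate points, whereas the reproducing-kernel construction and the first implication are routine.
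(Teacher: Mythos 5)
The paper itself contains no proof of this statement: it is quoted as Agler's theorem, with the proof deferred to the cited literature (Agler's polydisc paper \cite{RepresentationOfCertainHolomorphicFunctions}; see also \cite{amy20}). So your sketch has to be measured against the standard published argument, and in outline it follows it faithfully: the easy direction by setting $\mu=\lambda$ and solving \eqref{modelonbidiscformula} for $\phi$, the hard direction via an Agler decomposition obtained from cone separation over finite sets plus a compactness step. One caution on the separation step: the phrase ``a reproducing-kernel estimate exploiting $\|\phi\|_\infty\le 1$'' understates what is needed. The actual mechanism is to turn the dual-cone conditions $\overline{M_j}\circ\Psi\succeq 0$ into the statement that the two coordinate multiplications are commuting contractions on the (semi-)Hilbert space $(\mathbb{C}^F,\Psi)$, and then apply And\^o's inequality (after approximating $\phi$ by polynomials, e.g.\ using $\phi(\rho\,\cdot)$ with $\rho\uparrow 1$) to conclude the pairing is non-negative. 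Your parenthetical invoking And\^o is therefore not an ``equivalently'' but the indispensable input --- it is exactly why the theorem fails on $\mathbb{D}^3$. Also, the closedness of the cone $C_F$, which Hahn--Banach requires, needs the observation that $(M_j)_{\lambda\lambda}=1-|\lambda_j|^2$ is bounded below on the finite set $F$, so that the summands $A,B$ of any convergent sequence are themselves bounded.

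The one genuine gap is the holomorphy of the model maps. Your globalization produces $K_1,K_2$ only as pointwise subnet limits, and pointwise limits carry no regularity: there is no reason the limit kernels are sesquianalytic, so the reproducing-kernel construction yields $u_j$ that need not be holomorphic, whereas Definition \ref{modelonbidisc} requires holomorphy. Saying this is ``a property one arranges during their construction'' names the problem without solving it. The standard repair is the lurking-isometry argument: from the (possibly irregular) decomposition one reads off that the two families
\[
\begin{bmatrix}1\\ \lambda_1 u_1(\lambda)\\ \lambda_2 u_2(\lambda)\end{bmatrix}_{\lambda\in\mathbb{D}^2}
\quad\text{and}\quad
\begin{bmatrix}\phi(\lambda)\\ u_1(\lambda)\\ u_2(\lambda)\end{bmatrix}_{\lambda\in\mathbb{D}^2}
\]
have equal Gramians, hence there is an isometry mapping the first onto the second; extending it to a unitary $\begin{bmatrix} a & 1\otimes\beta\\ \gamma\otimes 1 & D\end{bmatrix}$ gives the realization $\phi(\lambda)=a+\langle E(\lambda)(1-DE(\lambda))^{-1}\gamma,\beta\rangle$, where $E(\lambda)$ is the block-diagonal multiplication by $\lambda_1,\lambda_2$. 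The new vectors $\tilde u(\lambda)=(1-DE(\lambda))^{-1}\gamma$ are automatically holomorphic, and unitarity gives back the model identity with these vectors --- precisely the pattern the present paper uses in its own Theorems \ref{realisationformulaforrG} and \ref{converseforrdotg}. With that step inserted, your argument closes and coincides with the route taken in the cited sources.
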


To study $\mathbb{G}_{r}$, we define the involution $\sigma$ on $\mathbb{C}^{2}$ by 
\begin{equation}\label{invol-sigma}
 \lambda^{\sigma} = (r\lambda_{2},r^{-1}\lambda_{1})~ \text{for all}~ \lambda= (\lambda_1, \lambda_2) \in \mathbb{C}^{2}.
 \end{equation}
Note that,  for all $\lambda\in r\mathbb{D}\times \mathbb{D}$, we have $ \lambda^{\sigma}\in r\mathbb{D}\times \mathbb{D}$ and
\begin{equation}\label{sigmainvolutionequation1}
(\lambda^{\sigma})^{\sigma} = (r\lambda_{2},r^{-1}\lambda_{1})^{\sigma} = (rr^{-1}\lambda_{1},r^{-1}r\lambda_{2}) = \lambda.
\end{equation}
This implies $(r\mathbb{D}\times\mathbb{D})^{\sigma} = r\mathbb{D}\times \mathbb{D}.$
Define the operator  $T_{r}:\mathbb{C}^{2} \to \mathbb{C}^{2}$ by 
\begin{equation}\label{Tr-def}
T_{r}(\lambda_{1},\lambda_{2})= (\lambda_{1},r\lambda_{2}) \ \text{for} \ \lambda=(\lambda_{1},\lambda_{2})\in\mathbb{C}^{2}.
\end{equation}
Define also the map $\pi : \mathbb{C}^{2} \rightarrow \mathbb{C}^{2}$ by the formula
\begin{equation}\label{pi-def}
\pi(\lambda_{1},\lambda_{2}) = (\lambda_{1}+\lambda_{2}, \lambda_{1}\lambda_{2}) \ \text{for} \ (\lambda_1, \lambda_2) \in \mathbb{C}^{2},
\end{equation}
so that we have $\mathbb{G}_{r} = \pi(\mathbb{D}\times r\mathbb{D})$. 
Note that, for $\lambda = (r\lambda_{1},\lambda_{2}) \in r\mathbb{D}\times \mathbb{D}$, $\lambda^{\sigma}=(r\lambda_{2},\lambda_{1})$ and
\begin{align*}
& \pi\big(T_{r}(\lambda)\big) = \pi(r\lambda_{1},r\lambda_{2}) = (r(\lambda_{1}+\lambda_{2}),r^{2}\lambda_{1}\lambda_{2}),\\
& \pi\big(T_{r}(\lambda^{\sigma})\big) = \pi\big(T_{r}(r\lambda_{2},\lambda_{1})\big) = \pi(r\lambda_{2},r\lambda_{1})= (r(\lambda_{1}+\lambda_{2}),r^{2}\lambda_{1}\lambda_{2}).\\ 
\end{align*}
Thus, for all $\lambda\in r\mathbb{D}\times\mathbb{D}$,
\begin{equation}\label{piTrequation}
\pi\big(T_{r}(\lambda)\big) =  \pi\big(T_{r}(\lambda^{\sigma})\big).
\end{equation}
Let $f : \mathbb{G}_{r} \to \overline{\mathbb{D}}$ be a holomorphic function. Then we may define $F:\mathbb{D}^{2}\rightarrow\overline{\mathbb{D}}$ by
\begin{equation}\label{F=fpiTr}
 F = f \circ \pi \circ T_{r} : \mathbb{D}^{2} \to \overline{\mathbb{D}}.
 \end{equation}
It  is clear  that $F$ is in the Schur class of $\mathbb{D}^{2}$. Note that, by equation (\ref{piTrequation}), $F$ is symmetric with respect to the involution $\sigma$,
\begin{equation}\label{Finvolution}
 F(\lambda^{\sigma}) = f(\pi(r\lambda_{2},\lambda_{1})) = f(\lambda_{1}+r\lambda_{2},r\lambda_{1}\lambda_{2}) = F(\lambda), \ \text{for all} \
  \lambda \in  r\mathbb{D}\times\mathbb{D}.
\end{equation}
We now bring all these notions together with the model of a function on $\mathbb{D}^{2}$ to prove the following statement.
\begin{thm}\label{modelformulaforGr}
Let $f\in \mathrm{Hol}(\mathbb{G}_{r},\overline{\mathbb{D}})$ and let 
$$F = f \circ \pi \circ T_{r}: \mathbb{D}^{2} \to \overline{\mathbb{D}}.$$ Then 
there exist a complex Hilbert space $\mathcal{M}$,  a closed non-trivial proper subspace  $\mathcal{H}_{1}$ of $\mathcal{M}$,
a unitary operator $U$ on $\mathcal{M}$, 
 a holomorphic map $w: r\mathbb{D}\times \mathbb{D} \to \mathcal{M}$, which satisfies $w(\lambda^{\sigma}) = w(\lambda)$ for all $\lambda \in r\mathbb{D}\times\mathbb{D}$,   such that, for all $\lambda, \mu \in r\mathbb{D}\times\mathbb{D}$, 
\begin{equation}\label{Zrinnerproduct}
1-\overline{F(\mu)}F(\lambda) = \langle Z_{r}(\lambda,\mu)w(\lambda),w(\mu) \rangle_{\mathcal{M}}, 
\end{equation}
where
\begin{align*}
Z_{r}(\lambda,\mu) =& (1_{\mathcal{M}}-r\overline{\mu_{2}}\mathcal{R}^{-1}U^{*})(1_{\mathcal{M}}-  \overline{\mu}_{1}\lambda_{1}\mathcal{R}^{-2})(1_{\mathcal{M}}-r\lambda_{2}U\mathcal{R}^{-1}) \nonumber \\
+ & (1_{\mathcal{M}}-\overline{\mu_{1}}\mathcal{R}^{-1}U^{*})(1_{\mathcal{M}}-r^{2} \overline{\mu}_{2}\lambda_{2}\mathcal{R}^{-2})(1_{\mathcal{M}}-\lambda_{1}U\mathcal{R}^{-1})
\end{align*}
and $\mathcal{R}\in\mathcal{B(M)}$ is defined by equation \eqref{RonM-intro}.
\end{thm}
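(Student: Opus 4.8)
The plan is to take the Agler (bidisc) model of $F$ furnished by Theorem~\ref{aglerstheorembidisc} and to \emph{symmetrize} it, exploiting the $\sigma$-invariance $F\circ\sigma=F$ from \eqref{Finvolution}, into a model of the type \eqref{Zrinnerproduct}. First I would apply Theorem~\ref{aglerstheorembidisc}: since $F\in\mathcal{S}(\mathbb{D}^2)$ there is a model $(\mathcal{H}_1\oplus\mathcal{H}_2,(u_1,u_2))$ in the sense of Definition~\ref{modelonbidisc}, so that \eqref{modelonbidiscformula} holds for all $\lambda,\mu\in\mathbb{D}^2$. I set $\mathcal{M}=\mathcal{H}_1\oplus\mathcal{H}_2$, identify $\mathcal{H}_1^\perp$ with $\mathcal{H}_2$, and take $\mathcal{R}\in\mathcal{B}(\mathcal{M})$ as in \eqref{RonM-intro}. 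Restricting \eqref{modelonbidiscformula} to $r\mathbb{D}\times\mathbb{D}$ and substituting $(\lambda^\sigma,\mu^\sigma)$ for $(\lambda,\mu)$, the relations $\lambda_1^\sigma=r\lambda_2$, $\lambda_2^\sigma=r^{-1}\lambda_1$ from \eqref{invol-sigma} together with \eqref{Finvolution} produce a \emph{second} representation of the same kernel,
\[
1-\overline{F(\mu)}F(\lambda)=(1-r^2\overline{\mu_2}\lambda_2)\langle u_1(\lambda^\sigma),u_1(\mu^\sigma)\rangle_{\mathcal{H}_1}+(1-r^{-2}\overline{\mu_1}\lambda_1)\langle u_2(\lambda^\sigma),u_2(\mu^\sigma)\rangle_{\mathcal{H}_2};
\]
the operator $\mathcal{R}$ is tailored precisely to reconcile the factors $r^{\pm2}$ here with the factors in the original representation.

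The core of the proof is to extract from these two representations a single unitary $U$ on $\mathcal{M}$ and a holomorphic map $w:r\mathbb{D}\times\mathbb{D}\to\mathcal{M}$ satisfying $w(\lambda^\sigma)=w(\lambda)$. Because $F$ is $\sigma$-invariant, the model $(u_1,u_2)$ and its $\sigma$-transform represent one and the same positive kernel $1-\overline{F(\mu)}F(\lambda)$, and I would combine them by a $\mathbb{Z}_2$-symmetrization: averaging the two representations over the involution $\sigma$ produces a model whose data interchange the two coordinate directions, and the operator implementing this interchange (suitably dressed by $\mathcal{R}$ to absorb the $r$-scaling built into $\sigma$) is the required unitary $U$. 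The invariance $\pi(T_r(\lambda))=\pi(T_r(\lambda^\sigma))$ recorded in \eqref{piTrequation} is exactly what allows the resulting map $w$ to be chosen $\sigma$-invariant. To guarantee that $U$ is genuinely unitary rather than merely isometric I would, if necessary, enlarge $\mathcal{M}$ by an auxiliary summand on which $u_1,u_2$ vanish so that the two coordinate spaces have equal dimension; this enlargement leaves $\mathcal{R}$ of the prescribed block form \eqref{RonM-intro}.

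With $U$, $\mathcal{R}$ and $w$ in hand, I would verify \eqref{Zrinnerproduct} by direct expansion. Writing $P=U\mathcal{R}^{-1}$, so that $P^*=\mathcal{R}^{-1}U^*$ and $P^*P=\mathcal{R}^{-2}$, the two summands of $Z_r(\lambda,\mu)$ become $(1-r\overline{\mu_2}P^*)(1-\overline{\mu_1}\lambda_1P^*P)(1-r\lambda_2P)$ and $(1-\overline{\mu_1}P^*)(1-r^2\overline{\mu_2}\lambda_2P^*P)(1-\lambda_1P)$. Pairing $Z_r(\lambda,\mu)w(\lambda)$ against $w(\mu)$ and multiplying out, the products $P^*\cdots P$ simplify through $U^*U=1$, the surviving terms in $U$ and $U^*$ recombine into expressions symmetric in the two coordinates, and the intertwining relations that define $U$ and $w$ collapse the whole expression onto the two terms of the bidisc kernel \eqref{modelonbidiscformula}, yielding $1-\overline{F(\mu)}F(\lambda)$.

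I expect the decisive difficulty to be the middle step: the bidisc model supplied by Theorem~\ref{aglerstheorembidisc} is neither $\sigma$-symmetric nor unique, so manufacturing a single unitary $U$ on $\mathcal{M}$ together with a genuinely holomorphic, $\sigma$-invariant $w$---while keeping $\mathcal{R}$ in the form \eqref{RonM-intro} and keeping track of the asymmetric $r$-weights introduced by $\sigma$---requires real care. By contrast, once these objects are constructed, the concluding verification of \eqref{Zrinnerproduct} is a lengthy but routine operator computation.
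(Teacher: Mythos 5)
Your overall route---symmetrizing the Agler model of $F$ via the invariance \eqref{Finvolution}---is the same as the paper's, and your opening steps (Agler's theorem, restriction to $r\mathbb{D}\times\mathbb{D}$, the second representation obtained by substituting $(\lambda^\sigma,\mu^\sigma)$) match the paper exactly. But there is a genuine gap at precisely the step you flag as decisive: the construction of the unitary $U$ and the $\sigma$-invariant map $w$. Your mechanism---that $U$ is ``the operator implementing the interchange of the two coordinate directions,'' made unitary by padding $\mathcal{M}$ so that the two coordinate spaces have equal dimension---does not work, for the reason you yourself note: the Agler model $(u_1,u_2)$ is not $\sigma$-symmetric, so there is no coordinate-swap operator intertwining the model data with its $\sigma$-transform, and averaging the two kernel representations by itself produces no operator at all. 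The dimension-equalizing enlargement is likewise a misconception: $U$ is not a map from $\mathcal{H}_1$ to $\mathcal{H}_2$.

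What is missing is a lurking-isometry argument. Set $v(\lambda)=\tfrac{1}{\sqrt 2}\,(u_1(\lambda),u_2(\lambda^\sigma))\in\mathcal{H}$ and $\widetilde{\mathcal{R}}=\mathrm{diag}(1_{\mathcal{H}_1},\,r\,1_{\mathcal{H}_2})$; averaging the two representations gives
\begin{equation*}
1-\overline{F(\mu)}F(\lambda)=\big\langle(1-\overline\mu_1\lambda_1\widetilde{\mathcal{R}}^{-2})v(\lambda),v(\mu)\big\rangle_{\mathcal{H}} + \big\langle(1-r^2\overline\mu_2\lambda_2\widetilde{\mathcal{R}}^{-2})v(\lambda^\sigma),v(\mu^\sigma)\big\rangle_{\mathcal{H}},
\end{equation*}
and applying the $\sigma$-invariance of $F$ \emph{a second time} (replace $\lambda$ by $\lambda^\sigma$ here) and equating the two right-hand sides yields, after cancellation, the Gramian identity
\begin{equation*}
\big\langle v(\lambda)-v(\lambda^\sigma),\,v(\mu)-v(\mu^\sigma)\big\rangle_{\mathcal{H}} = \big\langle \widetilde{\mathcal{R}}^{-1}\big(\lambda_1 v(\lambda)-r\lambda_2 v(\lambda^\sigma)\big),\,\widetilde{\mathcal{R}}^{-1}\big(\mu_1 v(\mu)-r\mu_2 v(\mu^\sigma)\big)\big\rangle_{\mathcal{H}}.
\end{equation*}
Equality of Gramians produces an isometry from the closed span of the vectors $\widetilde{\mathcal{R}}^{-1}(\lambda_1 v(\lambda)-r\lambda_2 v(\lambda^\sigma))$ onto the closed span of the vectors $v(\lambda)-v(\lambda^\sigma)$, and one extends this isometry to a unitary $U$ on an enlarged space $\mathcal{M}\supseteq\mathcal{H}$ (the enlargement serves only to turn an isometry into a unitary; $\mathcal{R}$ retains the form \eqref{RonM-intro} because it acts as $r$ on all of $\mathcal{H}_1^\perp$). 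The defining relation of $U$ rearranges to $(1-\lambda_1 U\mathcal{R}^{-1})v(\lambda)=(1-r\lambda_2 U\mathcal{R}^{-1})v(\lambda^\sigma)$, and since $\|\lambda_1 U\mathcal{R}^{-1}\|<1$ and $\|r\lambda_2 U\mathcal{R}^{-1}\|<1$ on $r\mathbb{D}\times\mathbb{D}$ one may \emph{define} $w(\lambda)=(1-r\lambda_2 U\mathcal{R}^{-1})^{-1}v(\lambda)$; it is this explicit formula, not an abstract symmetrization, that makes $w$ holomorphic and $\sigma$-invariant. Once $U$ and $w$ exist, substituting $v(\lambda)=(1-r\lambda_2 U\mathcal{R}^{-1})w(\lambda)$ and $v(\lambda^\sigma)=(1-\lambda_1 U\mathcal{R}^{-1})w(\lambda)$ into the averaged identity and moving the adjoint factors across the inner product gives \eqref{Zrinnerproduct}; that concluding computation is indeed routine, as you predicted, but without the lurking isometry there is no $U$ and no $w$ to compute with.
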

\begin{proof}
Since $F \in \mathcal{S}(\mathbb{D}^{2})$, by Agler's Theorem \ref{aglerstheorembidisc}, $F$ has a model $(\mathcal{H},u)$, that is, there exists an orthogonally decomposed Hilbert space $\mathcal{H}=\mathcal{H}_{1}\oplus\mathcal{H}_{2}$ and a pair of holomorphic maps $u=(u_{1},u_{2})$ from $\mathbb{D}^{2}$ to $\mathcal{H}_{1},\mathcal{H}_{2}$ respectively such that, for all $\lambda, \mu \in \mathbb{D}^{2}$, 
\begin{equation}\label{bidiscmodel}
1-\overline{F(\mu)}{F(\lambda)}=\langle(1-\overline{\mu_{1}}\lambda_{1}) u_{1}({\lambda}),u_{1}({\mu})\rangle_{\mathcal{H}_{1}}+\langle(1-\overline{\mu}_{2}\lambda_{2}) u_{2}({\lambda}),u_{2}({\mu})\rangle_{\mathcal{H}_{2}}.
\end{equation}

Consider $\lambda$ and $\mu$ in $ r\mathbb{D}\times \mathbb{D}$,
replace $\lambda, \mu$ by $\lambda^{\sigma}, \mu^{\sigma}$ respectively in equation \eqref{bidiscmodel} and use equation \eqref{Finvolution} to deduce that, for all $\lambda$ and $ \mu $ in $ r\mathbb{D}\times \mathbb{D}$, the following equation holds
\begin{align}\label{bidiscmodelsigma}
1-&\overline{F(\mu)}{F(\lambda)} \nonumber\\
&=(1-r^{2}\overline{\mu_{2}}\lambda_{2})\langle{u_{1}({\lambda^{\sigma}}),u_{1}({\mu}^{\sigma})\rangle_{\mathcal{H}_{1}}}+\langle{(1-r^{-2}\overline{\mu_{1}}\lambda_{1})u_{2}({\lambda^{\sigma}}),u_{2}({\mu^{\sigma}})\rangle_{\mathcal{H}_{2}}}.
\end{align}
Take the average of equations (\ref{bidiscmodel}) and (\ref{bidiscmodelsigma}) to obtain, for  all $\lambda$ and $\mu$ in $r\mathbb{D}\times\mathbb{D}$, 
\begin{align*}
1-& \overline{F(\mu)}F(\lambda) \\
= & \dfrac{1}{2}\Bigg(\Big\langle(1-\overline\mu_{1}\lambda_{1})u_{1}({\lambda}),u_{1}(\mu)\Big\rangle_{\mathcal{H}_{1}}+ \Big\langle (1-r^{-2}\overline\mu_{1}\lambda_{1})u_{2}({\lambda^{\sigma}}), u_{2}(\mu^{\sigma})\Big\rangle_{\mathcal{H}_{2}} \\
&+  \Big\langle(1-r^{2}\overline{\mu}_{2}\lambda_{2}) u_{1}(\lambda^{\sigma}), u_{1}(\mu^{\sigma})\Big\rangle_{\mathcal{H}_{1}}+\Big\langle(1-\overline{\mu}_{2}\lambda_{2})u_{2}(\lambda), u_{2}(\mu)\Big\rangle_{\mathcal{H}_{2}}\Bigg). 
\end{align*}
The last equation can be  be re-written as
\begin{align}
1-\overline{F(\mu)}F(\lambda) =& \dfrac{1}{2}\Bigg(\Bigg\langle
\begin{bmatrix}
	(1-\overline\mu_{1}\lambda_{1})u_{1}(\lambda) \\
	(1-r^{-2}\overline\mu_{1}\lambda_{1})u_{2}(\lambda^{\sigma}) \nonumber \\
\end{bmatrix},
\begin{bmatrix}
           u_{1}(\mu) \\
           u_{2}(\mu^{\sigma}) \\
\end{bmatrix}\Bigg\rangle_{\mathcal{H}_{1}\oplus \mathcal{H}_{2}}\\
& + \Bigg\langle
\begin{bmatrix}
	(1-r^{2}\overline\mu_{2}\lambda_{2})u_{1}({\lambda^{\sigma}}) \\
	(1-\overline\mu_{2}\lambda_{2})u_{2}({\lambda}) \\
\end{bmatrix},
\begin{bmatrix}
	u_{1}({\mu}^{\sigma}) \\
	u_{2}({\mu}) \\
\end{bmatrix}\Bigg\rangle_{\mathcal{H}_{1}\oplus \mathcal{H}_{2}}\Bigg). \label{bulkyequation}
\end{align}
For each $\lambda \in r\mathbb{D}\times\mathbb{D}$, define
the vector $v(\lambda)\in\mathcal{H}$ and the operator $\widetilde{\mathcal{R}} \in \mathcal{B(H)}$ by
\begin{equation}\nonumber
v({\lambda}) = \dfrac{1}{\sqrt{2}}
\begin{bmatrix}
	u_{1}({\lambda}) \\
	u_{2}({\lambda^{\sigma}}) \\
\end{bmatrix},~
\widetilde{\mathcal{R}} = 
\begin{bmatrix}
	1_{\mathcal{H}_1}   	& 0                \\
	0 		& r\cdot 1_{\mathcal{H}_2}  	\\
\end{bmatrix}.
\end{equation}
Then, for all $\lambda,\mu \in r\mathbb{D}\times\mathbb{D}$, equation (\ref{bulkyequation}) can be written as
\begin{equation}\label{averagedequations}
1-\overline{F(\mu)}F(\lambda) = \Big\langle(1_{\mathcal{H}}-\overline{\mu}_{1}\lambda_{1}\widetilde{\mathcal{R}}^{-2})v({\lambda}),v({\mu})\Big\rangle_{\mathcal{H}}+\Big\langle(1_{\mathcal{H}}-r^{2}\overline{\mu}_{2}\lambda_{2}\widetilde{\mathcal{R}}^{-2})v({\lambda}^{\sigma}),v(\mu^{\sigma})\Big\rangle_{\mathcal{H}}.
\end{equation}
Again, use the fact that $F(\lambda^{\sigma})=F(\lambda)$ for all $\lambda\in r \mathbb{D}\times\mathbb{D}$ and replace $\lambda$ with $\lambda^{\sigma}$ in equation (\ref{averagedequations}) to obtain
\begin{equation}\label{newaveragedequations}
1-\overline{F(\mu)}F(\lambda) = \Big\langle(1_{\mathcal{H}}-r\overline{\mu}_{1}\lambda_{2}\widetilde{\mathcal{R}}^{-2})v(\lambda^{\sigma}),v(\mu)\Big\rangle_{\mathcal{H}}+\Big\langle(1_{\mathcal{H}}-r\overline{\mu}_{2}\lambda_{1}\widetilde{\mathcal{R}}^{-2})v({\lambda}),v({\mu}^{\sigma})\Big\rangle_{\mathcal{H}}.
\end{equation}
We then equate the right hand sides of equations (\ref{averagedequations}) and (\ref{newaveragedequations}) to see that
\begin{align*}
\Big\langle(1_{\mathcal{H}}-\overline{\mu}_{1}&\lambda_{1}\widetilde{\mathcal{R}}^{-2})v({\lambda}),v({\mu})\Big\rangle_{\mathcal{H}}+\Big\langle(1_{\mathcal{H}}-r^{2}\overline{\mu}_{2}\lambda_{2}\widetilde{\mathcal{R}}^{-2})v({\lambda}^{\sigma}),v({\mu}^{\sigma})\Big\rangle_{\mathcal{H}}\nonumber \\ 
= \Big\langle(1_{\mathcal{H}}-&r\overline{\mu}_{1}\lambda_{2}\widetilde{\mathcal{R}}^{-2})v(\lambda^{\sigma}),v(\mu)\Big\rangle_{\mathcal{H}}+\Big\langle(1_{\mathcal{H}}-r\overline{\mu}_{2}\lambda_{1}\widetilde{\mathcal{R}}^{-2})v({\lambda}),v({\mu}^{\sigma})\Big\rangle_{\mathcal{H}}. 
\end{align*}
Expanding brackets, we find that
\begin{align}
& \Big\langle v(\lambda), v(\mu) \Big\rangle_{\mathcal{H}} -  \Big\langle \overline{\mu}_{1}\lambda_{1} \widetilde{\mathcal{R}}^{-2} v(\lambda), v(\mu)\Big\rangle_{\mathcal{H}} \nonumber\\
& \hspace{1cm} + \Big\langle v(\lambda^{\sigma}), v(\mu^{\sigma})\Big\rangle_{\mathcal{H}} - \Big\langle r^{2}\overline{\mu}_{2}\lambda_{2}\widetilde{\mathcal{R}}^{-2}v(\lambda^{\sigma}), v(\mu^{\sigma}) \Big\rangle_{\mathcal{H}} \nonumber \\
& =\Big\langle v(\lambda^{\sigma}), v(\mu) \Big\rangle_{\mathcal{H}} - \Big\langle r\overline{\mu}_{1}\lambda_{2}\widetilde{\mathcal{R}}^{-2}v(\lambda^{\sigma}), v(\mu) \Big\rangle_{\mathcal{H}}
\nonumber\\
& \hspace{1cm} + \Big\langle v(\lambda), v(\mu^{\sigma})\Big\rangle_{\mathcal{H}}-\Big\langle r\overline{\mu}_{2}\lambda_{1}\widetilde{\mathcal{R}}^{-2}v(\lambda),v(\mu^{\sigma})\Big\rangle_{\mathcal{H}} \label{rearrangingthisequation}.
\end{align}
Rearrange equation (\ref{rearrangingthisequation}) to obtain, for all $\lambda,\mu\in r\mathbb{D}\times \mathbb{D}$,
\begin{align*}
\Big\langle v(\lambda), v(\mu) \Big\rangle_{\mathcal{H}} &+ \Big\langle v(\lambda^{\sigma}),v(\mu^{\sigma})\Big\rangle_{\mathcal{H}}
- \Big\langle v(\lambda^{\sigma}), v(\mu)\Big\rangle_{\mathcal{H}} - \Big\langle v(\lambda), v(\mu^{\sigma})\Big\rangle_{\mathcal{H}} \\
=&\Big\langle \overline{\mu}_{1}\lambda_{1} \widetilde{\mathcal{R}}^{-2} v(\lambda), v(\mu)\Big\rangle_{\mathcal{H}} +\Big\langle r^{2}\overline{\mu}_{2}\lambda_{2}\widetilde{\mathcal{R}}^{-2}v(\lambda^{\sigma}), v(\mu^{\sigma}) \Big\rangle_{\mathcal{H}} \\
&- \Big\langle r\overline{\mu}_{1}\lambda_{2}\widetilde{\mathcal{R}}^{-2}v(\lambda^{\sigma}), v(\mu) \Big\rangle_{\mathcal{H}} -\Big\langle r\overline{\mu}_{2}\lambda_{1}\widetilde{\mathcal{R}}^{-2}v(\lambda),v(\mu^{\sigma})\Big\rangle_{\mathcal{H}}.
\end{align*}
The last  equation can be simplified to 
\begin{align*}
\Big\langle v(\lambda) - v(\lambda^{\sigma}), & \ v(\mu)\Big\rangle_{\mathcal{H}} + \Big\langle v(\lambda^{\sigma})-v(\lambda), v(\mu^{\sigma})\Big\rangle_{\mathcal{H}} \\
 =&\Big\langle \overline{\mu}_{1}\lambda_{1} \widetilde{\mathcal{R}}^{-2} v(\lambda)-r\overline{\mu}_{1}\lambda_{2}\widetilde{\mathcal{R}}^{-2}v(\lambda^{\sigma}), v(\mu)\Big\rangle_{\mathcal{H}} \\
&+ \Big\langle r^{2}\overline{\mu}_{2}\lambda_{2}\widetilde{\mathcal{R}}^{-2}v(\lambda^{\sigma}) - r\overline{\mu}_{2}\lambda_{1}\widetilde{\mathcal{R}}^{-2}v(\lambda),v(\mu^{\sigma}) \Big\rangle_{\mathcal{H}} 
\end{align*}
and then to
\begin{align}\label{right-form-1}
\Big\langle v(\lambda) - v(\lambda^{\sigma}), & \ v(\mu)-v(\mu^{\sigma})\Big\rangle_{\mathcal{H}} \nonumber \\
=&\Big\langle \overline{\mu}_{1}\widetilde{\mathcal{R}}^{-2}\big(\lambda_{1} v(\lambda)-r\lambda_{2}v(\lambda^{\sigma})\big), v(\mu)\Big\rangle_{\mathcal{H}} \nonumber \\
&+\Big\langle r\overline{\mu}_{2}\widetilde{\mathcal{R}}^{-2}\big(r\lambda_{2}v(\lambda^{\sigma}) - \lambda_{1}v(\lambda)\big),v(\mu^{\sigma}) \Big\rangle_{\mathcal{H}} .
\end{align}
The equation \eqref{right-form-1} can then be written in the form
\begin{align*}
\Big\langle v(\lambda) - v(\lambda^{\sigma}),& \ v(\mu)-v(\mu^{\sigma})\Big\rangle_{\mathcal{H}} \\
=&\Big\langle \widetilde{\mathcal{R}}^{-1}\big(\lambda_{1} v(\lambda)-r\lambda_{2}v(\lambda^{\sigma})\big), \mu_{1}\widetilde{\mathcal{R}}^{-1}v(\mu)\Big\rangle_{\mathcal{H}} \\
&+\Big\langle \widetilde{\mathcal{R}}^{-1}\big(r\lambda_{2}v(\lambda^{\sigma}) - \lambda_{1}v(\lambda)\big),r\mu_{2}\widetilde{\mathcal{R}}^{-1}v(\mu^{\sigma}) \Big\rangle_{\mathcal{H}} 
\end{align*}
and further simplified to the form
\begin{align*}
\Big\langle v(\lambda) - v(\lambda^{\sigma}), & \ v(\mu)-v(\mu^{\sigma})\Big\rangle_{\mathcal{H}} \\
=&\Big\langle \widetilde{\mathcal{R}}^{-1}\big(\lambda_{1} v(\lambda)-r\lambda_{2}v(\lambda^{\sigma})\big), \widetilde{\mathcal{R}}^{-1}\big(\mu_{1}v(\mu)-r\mu_{2}v(\mu^{\sigma})\big)\Big\rangle_{\mathcal{H}}.
\end{align*}
This is equivalent to saying that the Gramian of the family 
$$\{v({\lambda})-v({\lambda}^{\sigma}):\lambda\in r\mathbb{D}\times\mathbb{D}\}$$ in $\mathcal{H}$ is equal to the Gramian of the family 
$$\{\widetilde{\mathcal{R}}^{-1}(\lambda_{1}v({\lambda})-r\lambda_{2}v({\lambda}^{\sigma})) : \lambda \in r\mathbb{D}\times\mathbb{D}\},$$ also in $\mathcal{H}$. Hence
there exists a linear isometry 
\begin{align*}
 L :  \overline{\spn}\Big\{\widetilde{\mathcal{R}}^{-1}(\lambda_{1}v({\lambda})- r\lambda_{2}v({\lambda}^{\sigma})) & : \lambda \in r\mathbb{D}\times\mathbb{D}\Big\} \\
& \rightarrow \overline{\spn}\Big\{v({\lambda})-v({\lambda}^{\sigma}) : \lambda\in r\mathbb{D}\times\mathbb{D}\Big\}
\end{align*}
with
\begin{equation}\label{isometryonthegramian}
L\Big( \widetilde{\mathcal{R}}^{-1}(\lambda_{1}v({\lambda})-r\lambda_{2}v({\lambda}^{\sigma})\Big) = v({\lambda})-v({\lambda}^{\sigma}),
\end{equation}
for all $\lambda \in r\mathbb{D}\times \mathbb{D}$. For subsequent calculations, it becomes advantageous to extend $L$ to a unitary operator $U$ on a Hilbert space $\mathcal{M}\supseteq \mathcal{H}$. We also extend $\widetilde{\mathcal{R}}$ to an operator $\mathcal{R}$ on the Hilbert space $\mathcal{M}=\mathcal{H}_{1}\oplus \mathcal{H}_{1}^{\perp}$, where $\mathcal{H}_{1}^{\perp}= \mathcal{M} \ominus \mathcal{H}_{1}$,  by
\begin{equation*}
\mathcal{R} = 
\begin{bmatrix}
	\widetilde{\mathcal{R}}_{\mathcal{H}}   	& 0                \\
	0 		& r_{\mathcal{H}^{\perp}}  	\\
\end{bmatrix} = 
\begin{bmatrix}
	1_{\mathcal{H}_{1}}   	& 0			& 0              \\
	0 		& r\cdot 1_{\mathcal{H}_{2}}		& 0		\\
	0		& 0					& r\cdot 1_{\mathcal{H}^{\perp}}	\\
\end{bmatrix} = 
\begin{bmatrix}
	1_{\mathcal{H}_{1}  } 	& 0                \\
	0 		& r\cdot 1_{\mathcal{H}_{1}^{\perp}}  	\\
\end{bmatrix}.
\end{equation*}
We rearrange equation (\ref{isometryonthegramian}) with $L$ replaced by $U$ to obtain 
\begin{equation}\label{equalitybetweenextension}
(1_{\mathcal{M}}-\lambda_{1}U\mathcal{R}^{-1})v({\lambda}) = (1_{\mathcal{M}}-r\lambda_{2}U\mathcal{R}^{-1})v({\lambda}^{\sigma}),
\end{equation}
for all $\lambda \in r\mathbb{D}\times\mathbb{D}$.
Since $\mathcal{R}$ is a diagonal operator on $\mathcal{M}$ and $\lambda_{1}\in r\mathbb{D}$, we obtain
$$\| \lambda_{1}U\mathcal{R}^{-1}\|_{\mathcal{B(M)}}  = \lvert \lambda_{1} \rvert \| \mathcal{R}^{-1} \|_{\mathcal{B(M)}} = \dfrac{\lvert \lambda_{1} \rvert}{r} < 1. $$
Hence $1_{\mathcal{M}}-\lambda_{1}U\mathcal{R}^{-1}$ is invertible. Likewise, since 
$$ \| r \lambda_{2} U\mathcal{R}^{-1}\|_{\mathcal{B(M)}} =r \lvert \lambda_{2}\rvert \| \mathcal{R}^{-1}\|_{\mathcal{B(M)}} = \lvert \lambda_{2} \rvert < 1,$$ 
the operator $1_{\mathcal{M}}-r\lambda_{2}U\mathcal{R}^{-1}$ is also invertible. Note that 
\begin{equation}\label{verifyinvertibility}
(1_{\mathcal{M}}-r\lambda_{2}U\mathcal{R}^{-1})(1_{\mathcal{M}}-\lambda_{1}U\mathcal{R}^{-1}) = (1_{\mathcal{M}}-\lambda_{1}U\mathcal{R}^{-1})(1_{\mathcal{M}}-r\lambda_{2}U\mathcal{R}^{-1}),
\end{equation}
which can be verified by expanding brackets. Multiply both sides of equation (\ref{verifyinvertibility}) on the left and right by $(1_{\mathcal{M}}-\lambda_{1}U\mathcal{R}^{-1})^{-1}$ to produce
\begin{equation}\label{commutingequations}
(1_{\mathcal{M}}-\lambda_{1}U\mathcal{R}^{-1})^{-1}(1_{\mathcal{M}}-r\lambda_{2}U\mathcal{R}^{-1}) = (1_{\mathcal{M}}-r\lambda_{2}U\mathcal{R}^{-1})(1_{\mathcal{M}}-\lambda_{1}U\mathcal{R}^{-1})^{-1}.
\end{equation}
Rearrange equation (\ref{equalitybetweenextension}) to
\begin{equation}\label{firstinversestep}
v({\lambda}) = (1_{\mathcal{M}}-\lambda_{1}U\mathcal{R}^{-1})^{-1}(1_{\mathcal{M}}-r\lambda_{2}U\mathcal{R}^{-1})v({\lambda}^{\sigma}),
\end{equation}
for all $\lambda \in r\mathbb{D}\times\mathbb{D}$.
By equation (\ref{commutingequations}), equation (\ref{firstinversestep}) can be written
\begin{equation}\label{rearrangednewformula-1}
 v({\lambda}) =  (1_{\mathcal{M}}-r\lambda_{2}U\mathcal{R}^{-1})(1_{\mathcal{M}}-\lambda_{1}U\mathcal{R}^{-1})^{-1}v({\lambda}^{\sigma}).
\end{equation}
Thus
\begin{equation}\label{rearrangednewformula}
 (1_{\mathcal{M}}-r\lambda_{2}U\mathcal{R}^{-1})^{-1}v({\lambda}) = (1_{\mathcal{M}}-\lambda_{1}U\mathcal{R}^{-1})^{-1}v({\lambda}^{\sigma}),
\end{equation}
for all $\lambda \in r\mathbb{D}\times\mathbb{D}$.
Let us define $w: r\mathbb{D}\times\mathbb{D}\rightarrow \mathcal{M}$ by
\begin{equation}\label{wequation}
w(\lambda)=(1_{\mathcal{M}}-r\lambda_{2}U\mathcal{R}^{-1})^{-1}v({\lambda})~~\text{for all}~\lambda \in r\mathbb{D}\times \mathbb{D}.
\end{equation}
Note, for $\lambda \in r \mathbb{D}\times\mathbb{D}$,
\begin{align}
v({\lambda}) &= (1_{\mathcal{M}}-r\lambda_{2}U\mathcal{R}^{-1})w(\lambda) \label{vlambda},\\
~v({\lambda}^{\sigma}) &= (1_{\mathcal{M}}-\lambda_{1}U\mathcal{R}^{-1})w(\lambda^{\sigma}) \label{vlambdasigma}.
\end{align}
Thus, by equation (\ref{rearrangednewformula}), for $\lambda \in r \mathbb{D} \times \mathbb{D}$,
\begin{align}
w({\lambda^{\sigma}}) &= (1_{\mathcal{M}}-\lambda_{1}U\mathcal{R}^{-1})^{-1}v({\lambda}^{\sigma}) \nonumber \\
& =  (1_{\mathcal{M}}-r\lambda_{2}U\mathcal{R}^{-1})^{-1}v({\lambda}) \nonumber \\
& = w(\lambda). \label{wsigma}
\end{align}
Hence $w$ is symmetric with respect to the involution $\sigma$ on $r\mathbb{D}\times\mathbb{D}$. Substituting the expressions (\ref{vlambda}) and (\ref{vlambdasigma}) into equation (\ref{averagedequations}) and enlarging $\mathcal{H}$ to $\mathcal{M}$, we find that, for all $\lambda \in r\mathbb{D}\times\mathbb{D}$,
\begin{align*}
1-&\overline{F(\mu)}F(\lambda) \\
= &\Big\langle(1_{\mathcal{M}}-\overline{\mu}_{1}\lambda_{1}\mathcal{R}^{-2})(1_{\mathcal{M}}-r\lambda_{2}U\mathcal{R}^{-1}) w(\lambda),(1_{\mathcal{M}}-r\mu_{2}U\mathcal{R}^{-1})w(\mu)\Big\rangle_{\mathcal{M}}  \\
&+\Big\langle (1_{\mathcal{M}}-r^{2} \overline{\mu}_{2}\lambda_{2}\mathcal{R}^{-2})(1_{\mathcal{M}}-\lambda_{1}U\mathcal{R}^{-1}) w(\lambda), (1_{\mathcal{M}}-\mu_{1}U\mathcal{R}^{-1}) w(\mu) \Big\rangle_{\mathcal{M}}\\
= & \Big\langle(1_{\mathcal{M}}-r\mu_{2}U\mathcal{R}^{-1})^{*}(1_{\mathcal{M}}-\overline{\mu}_{1}\lambda_{1}\mathcal{R}^{-2})(1_{\mathcal{M}}-r\lambda_{2}U\mathcal{R}^{-1}) w(\lambda),w(\mu)\Big\rangle_{\mathcal{M}}  \\
&+\Big\langle (1_{\mathcal{M}}-\mu_{1}U\mathcal{R}^{-1})^{*}(1_{\mathcal{M}}-r^{2} \overline{\mu}_{2}\lambda_{2}\mathcal{R}^{-2})(1_{\mathcal{M}}-\lambda_{1}U\mathcal{R}^{-1}) w(\lambda), w(\mu) \Big\rangle_{\mathcal{M}}\\
= &\Big\langle(1_{\mathcal{M}}-r\overline{\mu_{2}}\mathcal{R}^{-1}U^{*})(1_{\mathcal{M}}-\overline{\mu}_{1}\lambda_{1}\mathcal{R}^{-2})(1_{\mathcal{M}}-r\lambda_{2}U\mathcal{R}^{-1})w(\lambda),w(\mu)\Big\rangle_{\mathcal{M}}  \\
&+\Big\langle (1_{\mathcal{M}}-\overline{\mu_{1}}\mathcal{R}^{-1}U^{*})(1_{\mathcal{M}}-r^{2} \overline{\mu}_{2}\lambda_{2}\mathcal{R}^{-2})(1_{\mathcal{M}}-\lambda_{1}U\mathcal{R}^{-1}) w(\lambda), w(\mu) \Big\rangle_{\mathcal{M}}.
\end{align*}
Thus, for all $\lambda \in r\mathbb{D}\times\mathbb{D}$,
$$ 1-\overline{F(\mu)}F(\lambda) = \langle Z_{r}(\lambda,\mu) w(\lambda),w(\mu)\rangle_{\mathcal{M}}, $$
where
\begin{align*}
Z_{r}(\lambda,\mu) =& (1_{\mathcal{M}}-r\overline{\mu_{2}}\mathcal{R}^{-1}U^{*})(1_{\mathcal{M}}-\overline{\mu}_{1}\lambda_{1}\mathcal{R}^{-2})(1_{\mathcal{M}}- r\lambda_{2}U\mathcal{R}^{-1}) \\
& + (1_{\mathcal{M}}-\overline{\mu_{1}}\mathcal{R}^{-1}U^{*})(1_{\mathcal{M}}-r^{2} \overline{\mu}_{2}\lambda_{2}\mathcal{R}^{-2})(1_{\mathcal{M}}-\lambda_{1}U\mathcal{R}^{-1}).
\end{align*}
Therefore equation (\ref{Zrinnerproduct}) holds.\end{proof}

Observe that the domain $r\cdot\mathbb{G}$ defined in equation \eqref{defrdotG} can be expressed in terms of the symmetrization map $\pi$ by
$$ r \cdot \mathbb{G}:=\pi(r\mathbb{D}\times r\mathbb{D}).$$

\begin{lem}\label{sURanalyticlemma} Let $r\in(0,1)$, let $\mathcal{M}$ be a complex Hilbert space, let   $\mathcal{H}_{1}$ be  a closed non-trivial proper subspace  of $\mathcal{M}$, let 
\begin{equation}\label{RonM}
\mathcal{R}=\begin{bmatrix}
	1_{\mathcal{H}_{1}  } 	& 0                \\
	0 		& r\cdot 1_{\mathcal{H}_{1}^{\perp}}  	\\
\end{bmatrix}\in\mathcal{B(M)},
\end{equation}
let $D$ be a contraction on $\mathcal{M}$ and let $U$ be a unitary operator on $\mathcal{M}$.
\begin{enumerate}
    \item The operator-valued function 
     $$w:r\cdot\mathbb{G}\rightarrow \mathcal{B(M)} : s\mapsto s_{U,\mathcal{R}},$$ 
where, 
for $s=(s_{1},s_{2})\in r\cdot\mathbb{G}$,
\begin{equation}\label{sUR-form}
s_{U,\mathcal{R}} = \bigg(2s_{2}\mathcal{R}^{-1}U-s_{1}\bigg)\bigg(2\mathcal{R}-s_{1}U\bigg)^{-1},
\end{equation}   
is well defined and holomorphic on $r\cdot\mathbb{G}$;
    \item $\|s_{U,\mathcal{R}}\|_{\mathcal{B(M)}}< 1$ for all $s=(s_{1},s_{2})\in r\cdot\mathbb{G}$;
    \item for every $\gamma\in\mathcal{M}$, the $\mathcal{M}$-valued function $$u:r\cdot\mathbb{G}\rightarrow \mathcal{M} \ \text{ defined by} \ u(s) = (1_{\mathcal{M}}-Ds_{U,\mathcal{R}})^{-1}\gamma$$ is holomorphic on $r\cdot\mathbb{G}$.
\end{enumerate}
\end{lem}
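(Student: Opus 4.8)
The plan is to reduce all three assertions to classical properties of the symmetrized bidisc $\mathbb{G}$ by means of a single algebraic factorisation of $s_{U,\mathcal{R}}$. I first record the geometry: by \eqref{defrdotG}, $r\cdot\mathbb{G}$ is the image of $\mathbb{G}$ under the linear isomorphism $(s,p)\mapsto(rs,r^{2}p)$ of $\mathbb{C}^{2}$, so it is open, and for every $s=(s_{1},s_{2})\in r\cdot\mathbb{G}$ the point $(s',p'):=(s_{1}/r,\,s_{2}/r^{2})$ lies in $\mathbb{G}$; in particular $|s'|<2$.

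The crux is the following factorisation. Put $T:=r\mathcal{R}^{-1}U$. Since $U$ is unitary and $r\mathcal{R}^{-1}=\mathrm{diag}(r\cdot 1_{\mathcal{H}_{1}},\,1_{\mathcal{H}_{1}^{\perp}})$ is a positive contraction, $T$ is a contraction, $\|T\|\le 1$. A short computation, substituting $s_{1}=rs'$ and $s_{2}=r^{2}p'$, gives
\[
2\mathcal{R}-s_{1}U=\mathcal{R}\bigl(2\cdot 1_{\mathcal{M}}-s'T\bigr),\qquad
2s_{2}\mathcal{R}^{-1}U-s_{1}=r\bigl(2p'T-s'\bigr),
\]
and hence
\[
s_{U,\mathcal{R}}=r\,(s',p')_{T}\,\mathcal{R}^{-1},
\]
where $(s',p')_{T}=(2p'T-s')(2-s'T)^{-1}$ is the operator \eqref{defsU} attached to $(s',p')\in\mathbb{G}$ and the contraction $T$. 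Because $\|s'T\|\le|s'|<2$, the factor $2\cdot 1_{\mathcal{M}}-s'T$ is invertible by a Neumann series, and $\mathcal{R}$ is invertible, so $2\mathcal{R}-s_{1}U$ is invertible and $s_{U,\mathcal{R}}$ is well defined. This proves the well-definedness in (1); holomorphy follows since $s\mapsto 2\mathcal{R}-s_{1}U$ and $s\mapsto 2s_{2}\mathcal{R}^{-1}U-s_{1}$ are affine (hence holomorphic), operator inversion is holomorphic on the open set of invertible elements of $\mathcal{B}(\mathcal{M})$, and products of holomorphic $\mathcal{B}(\mathcal{M})$-valued maps are holomorphic.

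For (2) I exploit the cancellation built into the factorisation. As $\|\mathcal{R}^{-1}\|=r^{-1}$,
\[
\|s_{U,\mathcal{R}}\|\le r\,\|(s',p')_{T}\|\,\|\mathcal{R}^{-1}\|=\|(s',p')_{T}\|,
\]
so it suffices to show $\|(s',p')_{T}\|<1$. This is a standard estimate for $\mathbb{G}$: the scalar Möbius function $g(\omega)=(2p'\omega-s')(2-s'\omega)^{-1}$ is holomorphic on a neighbourhood of $\overline{\mathbb{D}}$ (since $|s'|<2$), and membership $(s',p')\in\mathbb{G}$ implies $\sup_{|\omega|\le 1}|g(\omega)|<1$; as $(s',p')_{T}=g(T)$, von Neumann's inequality for the contraction $T$ yields $\|(s',p')_{T}\|\le\sup_{|\omega|\le1}|g(\omega)|<1$ (cf.\ \cite{AY17,amy20}). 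Thus $\|s_{U,\mathcal{R}}\|<1$.

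Finally, (3) is immediate: since $D$ is a contraction and $\|s_{U,\mathcal{R}}\|<1$ by (2), we have $\|Ds_{U,\mathcal{R}}\|<1$, so $1_{\mathcal{M}}-Ds_{U,\mathcal{R}}$ is invertible via its Neumann series; as $s\mapsto s_{U,\mathcal{R}}$ is holomorphic by (1), so is $s\mapsto(1_{\mathcal{M}}-Ds_{U,\mathcal{R}})^{-1}$, and applying it to the fixed vector $\gamma$ gives a holomorphic $\mathcal{M}$-valued function. The only real obstacle is the factorisation together with the observation that $T=r\mathcal{R}^{-1}U$ is a contraction; once these are in hand, the factors $r$ and $r^{-1}$ cancel and everything reduces to the classical symmetrized-bidisc estimates, so no delicate positivity computation involving $\mathcal{R}$ is needed.
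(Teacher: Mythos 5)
Your proposal is correct and follows essentially the same route as the paper: both factor out the contraction $T=r\mathcal{R}^{-1}U$ so that $s_{U,\mathcal{R}}$ becomes a M\"obius function of $T$ evaluated at the rescaled point of $\mathbb{G}$, composed with $r\mathcal{R}^{-1}$, and both then combine von Neumann's inequality with the scalar estimate $\sup_{\overline{\mathbb{D}}}|f_q|<1$ for $q\in\mathbb{G}$ (the paper makes this supremum explicit via the centre--radius formula and \cite[Theorem 2.1]{HGSB}, while you cite the known fact directly), finishing parts (1) and (3) identically via Neumann series and holomorphy of operator inversion. The only difference is presentational: your single factorisation $s_{U,\mathcal{R}}=r\,(s',p')_{T}\,\mathcal{R}^{-1}$ packages the paper's block-matrix manipulations more cleanly, but no new idea is involved.
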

\begin{proof}
(1). Let us first check that the definition \eqref{sUR-form} is valid. Since $\mathcal{R}$ is invertible,
$$\bigg(2\mathcal{R}-s_{1}U\bigg) = \bigg(1_{\mathcal{M}}-\dfrac{1}{2}s_{1}U\mathcal{R}^{-1}\bigg)\bigg(2\mathcal{R}\bigg) .$$
 Note that the operator
$$1_{\mathcal{M}}-\dfrac{1}{2}s_{1}U\mathcal{R}^{-1}$$
is invertible in $\mathcal{B(M)}$ for all $s=(s_{1},s_{2})\in r\cdot\mathbb{G}$. Indeed, for $s_{1}=r \lambda_{1}+r\lambda_{2}$ such that $\lambda_{1}\in \mathbb{D}$ and $\lambda_{2}\in\mathbb{D}$, 
$$ \bigg\| \dfrac{1}{2} s_{1}U\mathcal{R}^{-1}\bigg\|_{\mathcal{B(\mathcal{M})}} = \dfrac{1}{2} \lvert s_{1} \rvert \| \mathcal{R}^{-1} \|_{\mathcal{B(\mathcal{M})}} <  \dfrac{1}{2r}\big(2r\big) = 1,$$
therefore the inverse of $1_{\mathcal{M}}-\dfrac{1}{2}s_{1}U\mathcal{R}^{-1}$ exists. Hence,  $\bigg(2\mathcal{R}-s_{1}U\bigg)$
is also invertible in $\mathcal{B(M)}$ for all $s=(s_{1},s_{2})\in r\cdot\mathbb{G}$.
By  \cite[Proposition I.2.6]{completenormedalgebras}, for any $T\in\mathcal{B(M)}$, the map 
$$g:\mathrm{Inv}(\mathcal{B(M)}) \rightarrow \mathrm{Inv}(\mathcal{B(M)}),$$ 
given by $g:T\mapsto T^{-1}$ is holomorphic on $\mathrm{Inv}(\mathcal{B(M))}$.
Therefore, the operator-valued function
$$w:r\cdot\mathbb{G}\rightarrow \mathcal{B(M)} : s \mapsto s_{U,\mathcal{R}},$$ 
where $s_{U,\mathcal{R}} = \bigg(2s_{2}\mathcal{R}^{-1}U-s_{1}\bigg)\bigg(2\mathcal{R}-s_{1}U\bigg)^{-1},$
is holomorphic on $r\cdot \mathbb{G}$. Thus statement (1) is proved.

To prove the second statement, note that
$$
s_{U,\mathcal{R}} = \bigg(s_{2}\mathcal{R}^{-1}U-\dfrac{1}{2}s_{1}\bigg)\bigg(1_{\mathcal{M}}-\dfrac{s_{1}}{2}\mathcal{R}^{-1}U\bigg)^{-1}\mathcal{R}^{-1}.    
$$
Since $s=(s_{1},s_{2})\in r\cdot \mathbb{G}$, there is $q = (q_{1},q_{2}) \in \mathbb{G}$ such that $s_{1}=rq_1$ and $s_2 =r^2 q_2$.
 Thus, for $s=(s_{1},s_{2})\in r\cdot \mathbb{G}$,
\begin{align*}
s_{U,\mathcal{R}} & = \bigg( r^{2}q_{2} 
\begin{bmatrix}
	1_{\mathcal{H}_{1}} & 0 \\
	0 & r^{-1}_{\mathcal{H}_{1}^{\perp}} \\
\end{bmatrix}U-\dfrac{1}{2}q_{1}r\bigg)\bigg(1_{\mathcal{M}}-
\begin{bmatrix}
	1_{\mathcal{H}_{1}} & 0 \\
	0 & r^{-1}_{\mathcal{H}_{1}^{\perp}} \\
\end{bmatrix}\dfrac{1}{2}q_{1}rU\bigg)^{-1}\mathcal{R}^{-1} \\
& =r\bigg( rq_{2}\begin{bmatrix}
	1_{\mathcal{H}_{1}} & 0 \\
	0 & r^{-1}_{\mathcal{H}_{1}^{\perp}} \\
\end{bmatrix}U-\dfrac{1}{2}q_{1}\bigg)\bigg(1_{\mathcal{M}}-
\begin{bmatrix}
	1_{\mathcal{H}_{1}} & 0 \\
	0 & r^{-1}_{\mathcal{H}_{1}^{\perp}} \\
\end{bmatrix}\dfrac{1}{2}q_{1}rU\bigg)^{-1}\mathcal{R}^{-1} \\
& =\bigg( q_{2}\begin{bmatrix}
	r_{\mathcal{H}_{1}} & 0 \\
	0 & 1_{\mathcal{H}_{1}^{\perp}} \\
\end{bmatrix}U-\dfrac{1}{2}q_{1}\bigg)\bigg(1_{\mathcal{M}}-
\begin{bmatrix}
	r_{\mathcal{H}_{1}} & 0 \\
	0 & 1_{\mathcal{H}^{\perp}} \\
\end{bmatrix}\dfrac{1}{2}q_{1}U\bigg)^{-1}\bigg( r\mathcal{R}^{-1}\bigg) \\
& = \bigg( q_{2} 
\begin{bmatrix}
	r_{\mathcal{H}_{1}} & 0 \\
	0 & 1_{\mathcal{H}_{1}^{\perp}} \\
\end{bmatrix}U-\dfrac{1}{2}q_{1}\bigg)\bigg(1_{\mathcal{M}}-
\begin{bmatrix}
	r_{\mathcal{H}_{1}} & 0 \\
	0 & 1_{\mathcal{H}_{1}^{\perp}} \\
\end{bmatrix}\dfrac{1}{2}q_{1}U\bigg)^{-1}\begin{bmatrix}
	r_{\mathcal{H}_{1}} & 0 \\
	0 & 1_{\mathcal{H}_{1}^{\perp}} \\
\end{bmatrix}.
\end{align*}
For all $q=(q_{1},q_{2})\in\mathbb{G}$, define 
$$f_{q}(\lambda) = \dfrac{q_{2}\lambda-\dfrac{1}{2}q_{1}}{1-\dfrac{1}{2}q_{1}\lambda},$$
for $\lambda$ in a neighbourhood of $\overline{\mathbb{D}}$. The linear fractional map $f_{q}$ maps $\mathbb{D}$ onto the open disc with centre and radius
$$ 2\dfrac{\overline{q}_{1}q_{2} -q_{1}}{4-\lvert{q_{1}\rvert}^{2}}, ~~\dfrac{\lvert{q_{1}^{2}-4q_{2}\rvert}}{4-\lvert{q_{1}\rvert}^{2}},$$
respectively. 

Note that  the operator  
$$\begin{bmatrix}
	r\cdot 1_{\mathcal{H}_{1}} & 0 \\
	0 & 1_{\mathcal{H}^{\perp}_{1}} \\
\end{bmatrix}U$$
is a contraction on $\mathcal{M}$ and 
$$ s_{U,\mathcal{R}} = f_{q}\bigg( \begin{bmatrix}
	r\cdot 1_{\mathcal{H}_{1}} & 0 \\
	0 & 1_{\mathcal{H}^{\perp}_{1}} \\
\end{bmatrix}U\Bigg) \begin{bmatrix}
	r\cdot 1_{\mathcal{H}_{1}} & 0 \\
	0 & 1_{\mathcal{H}^{\perp}_{1}} \\
\end{bmatrix}.
$$
By von Neumann's inequality,  we have
\begin{align}
\| s_{U,\mathcal{R}} \|_{\mathcal{B(M)}} &= \Bigg\| f_{q}\bigg( \begin{bmatrix}
	r\cdot 1_{\mathcal{H}_{1}} & 0 \\
	0 & 1_{\mathcal{H}^{\perp}_{1}} \\
\end{bmatrix}U\Bigg) \begin{bmatrix}
	r\cdot 1_{\mathcal{H}_{1}} & 0 \\
	0 & 1_{\mathcal{H}^{\perp}_{1}} \\
\end{bmatrix}\Bigg\| \nonumber\\
& \leq \Bigg\| f_{q}\bigg( \begin{bmatrix}
	r\cdot 1_{\mathcal{H}_{1}} & 0 \\
	0 & 1_{\mathcal{H}^{\perp}_{1}} \\
\end{bmatrix}U\Bigg)  \Bigg\| \nonumber\\
& \leq \sup_{\mathbb{D}} \lvert{f_{q}\rvert} = \dfrac{   2\lvert{\overline{q}_{1}q_{2} -q_{1}\rvert}+\lvert{q_{1}^{2}-4q_{2}\rvert}}{4-\lvert{q_{1}\rvert}^{2}}. \label{supremumsur}
\end{align}
By    \cite[Theorem 2.1]{HGSB}, the right hand side of inequality (\ref{supremumsur}) is less than one for all $ q\in\mathbb{G}$. Thus statement (2) is proved.

(3). In part (1), we have shown that $$w:r\cdot\mathbb{G}\rightarrow \mathcal{B}(\mathcal{M}): \ s \mapsto s_{U,\mathcal{R}}$$ is holomorphic on $r\cdot \mathbb{G}$. Hence,  for every contraction $D \in {\mathcal{B(M)}}$, the map  $s \mapsto 1_{\mathcal{M}}-Ds_{U,\mathcal{R}}$ is holomorphic on $r\cdot\mathbb{G}$. By part (2), for every $s \in r\cdot \mathbb{G}$, $\|s_{U,\mathcal{R}}\|_{\mathcal{B(M)}}< 1$. Thus $1_{\mathcal{M}}-Ds_{U,\mathcal{R}}$ is invertible. 
Therefore, by \cite[Proposition I.2.6]{completenormedalgebras}, for every $\gamma\in\mathcal{M}$, the $\mathcal{M}$-valued function $$u:r\cdot\mathbb{G}\rightarrow \mathcal{M}, \ \text{ defined by} \ u(s) = (1_{\mathcal{M}}-Ds_{U,\mathcal{R}})^{-1}\gamma,$$ is holomorphic on $r\cdot\mathbb{G}$.
\end{proof}

\section{A model formula and a realization for  the symmetrized skew bidisc}

Let us use Theorem \ref{modelformulaforGr} to show that there is a model formula for a function in $\mathcal{S}(\mathbb{G}_{r})$. 
\begin{thm}\label{modelforsur}
Let $r\in(0,1)$ and let $f\in\mathcal{S}(\mathbb{G}_{r})$. Then there exist a model $(\mathcal{M}, (U,\mathcal{R}), u)$ for $f$ on $r\cdot \mathbb{G}$, that is, there exist a complex Hilbert space $\mathcal{M}$,  a closed non-trivial proper subspace  $\mathcal{H}_{1}$ of $\mathcal{M}$,
 a holomorphic map $u:r\cdot \mathbb{G}\rightarrow \mathcal{M}$, a unitary operator $U$ on $\mathcal{M}$ and the operator $\mathcal{R}$ on $\mathcal{M}$ defined by 
\begin{equation}\label{RonM-2}
\mathcal{R}=\begin{bmatrix}
	1_{\mathcal{H}_{1}  } 	& 0                \\
	0 		& r\cdot 1_{\mathcal{H}_{1}^{\perp}}  	\\
\end{bmatrix},
\end{equation} 
such that, for all $s=(s_{1},s_{2})\in r\cdot\mathbb{G}$ and $t=(t_{1},t_{2})\in r \cdot \mathbb{G}$,
\begin{equation}\label{grcorollary}
1- \overline{f(t)}f(s) =\Bigg\langle  \bigg(1_{\mathcal{M}}-t_{U,\mathcal{R}}^{*}s_{U,\mathcal{R}}\bigg)u(s),u(t)\Bigg\rangle_{\mathcal{M}},
  \end{equation}
where the operators $s_{U,\mathcal{R}}$ and $t_{U,\mathcal{R}}$ are strict contractions on $\mathcal{M}$  defined by 
equation \eqref{sUR-form}.
\end{thm}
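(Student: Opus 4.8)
The plan is to assemble the model $(\mathcal{M},(U,\mathcal{R}),u)$ out of the data manufactured in the proof of Theorem~\ref{modelformulaforGr}, and to import the strict contractivity of $s_{U,\mathcal{R}}$ from Lemma~\ref{sURanalyticlemma}. Putting $F=f\circ\pi\circ T_{r}$, that proof supplies $\mathcal{M}$, the subspace $\mathcal{H}_{1}$, the unitary $U$, the operator $\mathcal{R}$ of \eqref{RonM}, and the $\sigma$-symmetric holomorphic map $w$, together with the vectors $v(\lambda)=(1_{\mathcal{M}}-r\lambda_{2}U\mathcal{R}^{-1})w(\lambda)$ of \eqref{vlambda} and $v(\lambda^{\sigma})=(1_{\mathcal{M}}-\lambda_{1}U\mathcal{R}^{-1})w(\lambda)$ of \eqref{vlambdasigma} (using $w(\lambda^{\sigma})=w(\lambda)$). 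I shall work from the averaged identity \eqref{averagedequations}, abbreviating its right-hand side by $K(\lambda,\mu):=1-\overline{F(\mu)}F(\lambda)$.

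First I set up the dictionary between the domains. For $\lambda=(\lambda_{1},\lambda_{2})\in r\mathbb{D}\times\mathbb{D}$ the point $s=\pi(\lambda_{1},r\lambda_{2})\in r\cdot\mathbb{G}$ has $s_{1}=\lambda_{1}+r\lambda_{2}$, $s_{2}=r\lambda_{1}\lambda_{2}$ and $F(\lambda)=f(s)$, and the only two preimages of $s$ under $\lambda\mapsto s$ are $\lambda$ and $\lambda^{\sigma}$. I then define $u(s):=\tfrac{1}{\sqrt{2}}\bigl(v(\lambda)+v(\lambda^{\sigma})\bigr)$. Since $w(\lambda^{\sigma})=w(\lambda)$ this is independent of the chosen preimage; being a symmetric holomorphic function of the two preimages it descends to a holomorphic map $u:r\cdot\mathbb{G}\to\mathcal{M}$, a fact also visible from the factored form $u(s)=\tfrac{1}{\sqrt{2}}\bigl(2\cdot 1_{\mathcal{M}}-s_{1}U\mathcal{R}^{-1}\bigr)w(\lambda)$.

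The computational core is to simplify $s_{U,\mathcal{R}}$ applied to $u(s)$. With $P=1_{\mathcal{M}}-r\lambda_{2}U\mathcal{R}^{-1}$ and $Q=1_{\mathcal{M}}-\lambda_{1}U\mathcal{R}^{-1}$, which commute by \eqref{verifyinvertibility} and satisfy $Pw(\lambda)=v(\lambda)$, $Qw(\lambda)=v(\lambda^{\sigma})$, I would verify the two elementary identities $2\mathcal{R}-s_{1}U=(P+Q)\mathcal{R}$ and $(2s_{2}\mathcal{R}^{-1}U-s_{1})\mathcal{R}^{-1}=-\mathcal{R}^{-1}(\lambda_{1}P+r\lambda_{2}Q)$. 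Substituting these into the definition \eqref{sUR-form} gives $s_{U,\mathcal{R}}=-\mathcal{R}^{-1}(\lambda_{1}P+r\lambda_{2}Q)(P+Q)^{-1}$, and since $\sqrt{2}\,u(s)=(P+Q)w(\lambda)$ the inverse $(P+Q)^{-1}$ cancels, leaving the inverse-free expression $s_{U,\mathcal{R}}u(s)=-\tfrac{1}{\sqrt{2}}\mathcal{R}^{-1}\bigl(\lambda_{1}v(\lambda)+r\lambda_{2}v(\lambda^{\sigma})\bigr)$.

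Finally I would expand $\langle(1_{\mathcal{M}}-t_{U,\mathcal{R}}^{*}s_{U,\mathcal{R}})u(s),u(t)\rangle=\langle u(s),u(t)\rangle-\langle s_{U,\mathcal{R}}u(s),t_{U,\mathcal{R}}u(t)\rangle$, using the previous formula for $s_{U,\mathcal{R}}u(s)$ and the identity $\mathcal{R}^{-2}=(U\mathcal{R}^{-1})^{*}(U\mathcal{R}^{-1})$ to absorb the outer $\mathcal{R}^{-1}$ factors. This reduces to $\tfrac12$ times a difference of two four-term sums of scalar-weighted inner products of the $v$'s, which I would sort into a diagonal block in $\bigl(v(\lambda),v(\mu)\bigr),\bigl(v(\lambda^{\sigma}),v(\mu^{\sigma})\bigr)$ and a cross block in $\bigl(v(\lambda),v(\mu^{\sigma})\bigr),\bigl(v(\lambda^{\sigma}),v(\mu)\bigr)$. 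The diagonal block is exactly $K(\lambda,\mu)$ by \eqref{averagedequations}, while the cross block is the same expression with $\mu$ replaced by $\mu^{\sigma}$, namely $K(\lambda,\mu^{\sigma})$. Because $F(\mu^{\sigma})=F(\mu)$ by \eqref{Finvolution} we have $K(\lambda,\mu^{\sigma})=K(\lambda,\mu)$, so the two blocks sum to $2K(\lambda,\mu)$ and the prefactor $\tfrac12$ returns $K(\lambda,\mu)=1-\overline{f(t)}f(s)$, which is \eqref{grcorollary}; strict contractivity of $s_{U,\mathcal{R}},t_{U,\mathcal{R}}$ is Lemma~\ref{sURanalyticlemma}(2). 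I expect the main obstacle to be precisely this reorganization: one must recognize that the symmetrization doubles the kernel, producing both $K(\lambda,\mu)$ and $K(\lambda,\mu^{\sigma})$, a phenomenon hidden until the cross terms are collected and one that is exactly what forces the normalizing constant $\tfrac{1}{\sqrt{2}}$ in the definition of $u$.
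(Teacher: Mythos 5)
Your proposal is correct, and it ends up with exactly the paper's model: your $u(s)=\tfrac{1}{\sqrt2}\bigl(v(\lambda)+v(\lambda^{\sigma})\bigr)=\tfrac{1}{\sqrt2}\bigl(2-s_{1}U\mathcal{R}^{-1}\bigr)w(\lambda)$ coincides with the paper's definition of $u$, and both proofs quote Lemma \ref{sURanalyticlemma}(2) for strict contractivity. The difference lies in how \eqref{grcorollary} is verified. The paper works at the level of operator kernels: it rewrites $Z_{r}(\lambda,\mu)$ in the symmetric variables as $Y_{\mathcal{R},U}(s,t)$ in \eqref{Zr-YRU} and then asserts (``one can check'') the factorization \eqref{simplifiedzr}, $Y_{\mathcal{R},U}(s,t)=\tfrac12\bigl(2-t_{1}U\mathcal{R}^{-1}\bigr)^{*}\bigl(1_{\mathcal{M}}-t_{U,\mathcal{R}}^{*}s_{U,\mathcal{R}}\bigr)\bigl(2-s_{1}U\mathcal{R}^{-1}\bigr)$, from which the model formula drops out. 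You instead work at the level of vectors: your two identities $2\mathcal{R}-s_{1}U=(P+Q)\mathcal{R}$ and $(2s_{2}\mathcal{R}^{-1}U-s_{1})\mathcal{R}^{-1}=-\mathcal{R}^{-1}(\lambda_{1}P+r\lambda_{2}Q)$ both check out (the second because $\lambda_{1}P+r\lambda_{2}Q=s_{1}-2s_{2}U\mathcal{R}^{-1}$), giving the inverse-free formula $s_{U,\mathcal{R}}u(s)=-\tfrac{1}{\sqrt2}\mathcal{R}^{-1}\bigl(\lambda_{1}v(\lambda)+r\lambda_{2}v(\lambda^{\sigma})\bigr)$; expanding $\langle u(s),u(t)\rangle-\langle s_{U,\mathcal{R}}u(s),t_{U,\mathcal{R}}u(t)\rangle$ then yields a diagonal block equal to the right side of \eqref{averagedequations} and a cross block equal to the right side of \eqref{newaveragedequations}, each of which is $1-\overline{F(\mu)}F(\lambda)$, so the prefactor $\tfrac12$ collapses the sum correctly. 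What your route buys is that the unproved factorization \eqref{simplifiedzr} is replaced by elementary, explicitly checkable identities, and the ``doubling'' of the kernel that forces the normalization $\tfrac{1}{\sqrt2}$ becomes visible rather than being hidden inside an operator identity. The mild cost is that you reach into the proof of Theorem \ref{modelformulaforGr} for $v$, \eqref{averagedequations} and \eqref{newaveragedequations} instead of using only its statement, though these are recoverable from the statement by substituting \eqref{vlambda}--\eqref{vlambdasigma} into $Z_{r}$, so the dependence is presentational rather than logical. Note also that both your argument and the paper's share the same soft spot: the claim that a $\sigma$-symmetric holomorphic map on $r\mathbb{D}\times\mathbb{D}$ descends to a holomorphic map on $r\cdot\mathbb{G}$ is asserted, not proved, in either treatment.
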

\begin{remark}
Note that in this theorem we only prove that the formula \eqref{grcorollary} is valid on $ r \cdot \mathbb{G}$, which is a proper subset of $\mathbb{G}_{r}$, since we can only guarantee that  $s_{U,\mathcal{R}}$ given by equation \eqref{sUR-form} and $u$ are well defined  on $ r \cdot \mathbb{G}$.
\end{remark}
\begin{proof}
For the given $f\in\mathcal{S}(\mathbb{G}_{r})$, we define $F=f\circ \pi \circ T_{r} : \mathbb{D}^{2} \rightarrow \overline{\mathbb{D}}$, see equations \eqref{pi-def}, \eqref{Tr-def} and  \eqref{F=fpiTr}. By Theorem \ref{modelformulaforGr}, there exists a Hilbert space $\mathcal{M} = \mathcal{H}_{1}\oplus \mathcal{H}_{2}$, a unitary operator $U$ on $\mathcal{M}$, and  a holomorphic map $w: r\mathbb{D}\times \mathbb{D} \to \mathcal{M}$, which satisfies $w(\lambda^{\sigma}) = w(\lambda)$ for all $\lambda \in r\mathbb{D}\times\mathbb{D}$, such that, for all $\lambda, \mu \in r\mathbb{D}\times\mathbb{D}$,
\begin{equation}\label{rearrangedbidiscformula}
1-\overline{F(\mu)}F(\lambda) = \langle Z_{r}(\lambda, \mu) w(\lambda),w(\mu)\rangle_{\mathcal{M}},
\end{equation}
where
\begin{align}
Z_{r}(\lambda,\mu) = &(1_{\mathcal{M}}-r\overline{\mu_{2}}\mathcal{R}^{-1}U^{*})(1_{\mathcal{M}}-\overline{\mu}_{1}\lambda_{1}\mathcal{R}^{-2})(1_{\mathcal{M}}-r\lambda_{2}U\mathcal{R}^{-1})\nonumber\\
&+ (1_{\mathcal{M}}-\overline{\mu_{1}}\mathcal{R}^{-1}U^{*})(1_{\mathcal{M}}-r^{2} \overline{\mu}_{2}\lambda_{2}\mathcal{R}^{-2})(1_{\mathcal{M}}-\lambda_{1}U\mathcal{R}^{-1}).\label{Zroperator}
\end{align}

Let us rewrite $Z_{r}$ with symmetric variables with respect to $\sigma$ in $r \cdot \mathbb{G}$.
For $\lambda, \mu \in r\mathbb{D}\times\mathbb{D}$, expand equation (\ref{Zroperator}),
\begin{align*}
&Z_{r}(\lambda,\mu) \\
&= (1_{\mathcal{M}}-\overline{\mu_{1}}\lambda_{1}\mathcal{R}^{-2}-r\overline{\mu_{2}}\mathcal{R}^{-1}U^{*} + r\overline{\mu_{1}}\overline{\mu_{2}}\lambda_{1}\mathcal{R}^{-1}U^{*}\mathcal{R}^{-2})(1_{\mathcal{M}}-r\lambda_{2}U\mathcal{R}^{-1})\\
&+(1_{\mathcal{M}}-r^{2}\overline{\mu_{2}}\lambda_{2}\mathcal{R}^{-2}-\overline{\mu_{1}}\mathcal{R}^{-1}U^{*}+r^{2}\overline{\mu_{1}}\overline{\mu_{2}}\lambda_{2}\mathcal{R}^{-1}U^{*}\mathcal{R}^{-2})(1_{\mathcal{M}}-\lambda_{1}U\mathcal{R}^{-1})\\
&= 1_{\mathcal{M}}-r\lambda_{2}U\mathcal{R}^{-1}-\overline{\mu_{1}}\lambda_{1}\mathcal{R}^{-2}+r\overline{\mu_{1}}\lambda_{1}\lambda_{2}\mathcal{R}^{-2}U\mathcal{R}^{-1}-r\overline{\mu_{2}}\mathcal{R}^{-1}U^{*}\\
&+r^{2}\overline{\mu_{2}}\lambda_{2}R^{-1}U^{*}UR^{-1}+r\overline{\mu_{1}}\overline{\mu_{2}}\lambda_{1}\mathcal{R}^{-1}U^{*}\mathcal{R}^{-2}-r^{2}\overline{\mu_{1}}\overline{\mu_{2}}\lambda_{1}\lambda_{2}\mathcal{R}^{-1}U^{*}\mathcal{R}^{-2}U\mathcal{R}^{-1}\\
&+1_{\mathcal{M}}-\lambda_{1}U\mathcal{R}^{-1}-r^{2}\overline{\mu_{2}}\lambda_{2}\mathcal{R}^{-2}+r^{2}\overline{\mu_{2}}\lambda_{1}\lambda_{2}\mathcal{R}^{-2}U\mathcal{R}^{-1}-\overline{\mu_{1}}\mathcal{R}^{-1}U^{*}\\
&+\overline{\mu_{1}}\lambda_{1}\mathcal{R}^{-1}U^{*}U\mathcal{R}^{-1}+r^{2}\overline{\mu_{1}}\overline{\mu_{2}}\lambda_{2}\mathcal{R}^{-1} U^{*}\mathcal{R}^{-2}-r^{2}\overline{\mu_{1}}\overline{\mu_{2}}\lambda_{1}\lambda_{2}\mathcal{R}^{-1}U^{*}\mathcal{R}^{-2} U \mathcal{R}^{-1}.
\end{align*}
Since $U$ is unitary, let us simplify and collect terms to find that 
\begin{align}
Z_{r}(\lambda,\mu) = &2\Big(1_{\mathcal{M}}-r^{2}\overline{\mu}_{1}\overline{\mu}_{2}\lambda_{1}\lambda_{2}\mathcal{R}^{-1}U^{*}\mathcal{R}^{-2}U\mathcal{R}^{-1}\Big)  \nonumber \\
&+\Big( r\lambda_{1}\lambda_{2}(\overline{\mu_{1}}+r\overline{\mu_{2}})\mathcal{R}^{-2}-(\lambda_{1}+r\lambda_{2})\Big)U\mathcal{R}^{-1}  \nonumber\\
&+\mathcal{R}^{-1}U^{*}\Big(r\overline{\mu}_{1} \overline{\mu}_{2}(\lambda_{1}+r\lambda_{2})\mathcal{R}^{-2}-(\overline{\mu_{1}}+r\overline{\mu_{2}})\Big),  \label{Zrnewform}
\end{align}
 for $\lambda,\mu \in r\mathbb{D}\times\mathbb{D}$.

 Thus, for $\lambda, \mu \in r\mathbb{D}\times \mathbb{D}$, we introduce symmetric variables with respect to $\sigma$ 
\begin{align}\label{lambda-s}
& s_{1}=\lambda_{1}+r\lambda_{2},~s_{2} = r\lambda_{1}\lambda_{2} \nonumber\\
& t_{1}=\mu_{1}+r\mu_{2},~t_{2}=r\mu_{1}\mu_{2}. 
\end{align}
It is clear that  $s =(s_1, s_2)$ and $t=(t_1,t_2)$ are in $r \cdot \mathbb{G}$ and, 
\begin{equation}\label{sigmainvol-s1s2}
(s^{\sigma})^{\sigma} = (r s_{2},r^{-1} s_{1})^{\sigma} = (rr^{-1} s_{1},r^{-1} r s_{2}) = s,
\end{equation}
\begin{equation}\label{sigmainvol-t1t2}
(t^{\sigma})^{\sigma} = (r t_{2},r^{-1} t_{1})^{\sigma} = (rr^{-1} t_{1},r^{-1} r t_{2}) = t.
\end{equation}
We can rewrite equation (\ref{Zrnewform}) in terms of $(s_{1},s_{2}), (t_{1},t_{2}) \in r \cdot  \mathbb{G}$ using connections \eqref{lambda-s}, to obtain 
\begin{align}\label{Zr-YRU}
 Z_{r}(\lambda,\mu) = &  Y_{\mathcal{R},U}(s,t) = 2\Big(1_{\mathcal{M}}-\overline{t}_{2}s_{2}\mathcal{R}^{-1}U^{*}\mathcal{R}^{-2}U\mathcal{R}^{-1}\Big)\\
    &+\Big(\overline{t}_{1}s_{2}\mathcal{R}^{-2}-s_{1}\Big)U\mathcal{R}^{-1} 
+\mathcal{R}^{-1}U^{*}\Big(\overline{t}_{2}s_{1}\mathcal{R}^{-2}-\overline{t}_{1}\Big). \nonumber
\end{align}
One can check that
\begin{align*}
    Y_{\mathcal{R},U}(s,t) = &\dfrac{1}{2}\bigg(2-\overline{t}_{1}\mathcal{R}^{-1}U^{*}\bigg)\bigg(2-s_{1}U\mathcal{R}^{-1}\bigg)\\
    &-\dfrac{1}{2}\mathcal{R}^{-1}\bigg(2\overline{t}_{2}U^{*}\mathcal{R}^{-1}-\overline{t}_{1}\bigg)\bigg(2s_{2}\mathcal{R}^{-1}U-s_{1}\bigg)\mathcal{R}^{-1}.
\end{align*}

Recall Definition \ref{sUR-form} of the operator $s_{U,\mathcal{R}}$  on $\mathcal{M}$:
\begin{equation*}
    s_{U,\mathcal{R}} = \bigg(2s_{2}\mathcal{R}^{-1}U-s_{1}\bigg)\bigg(2\mathcal{R}-s_{1}U\bigg)^{-1} 
\end{equation*}
for $s=(s_{1},s_{2}) \in r \cdot \mathbb{G}$.
By Lemma \ref{sURanalyticlemma}, the operator $s_{U,\mathcal{R}}$ is well defined and is a strict contraction for all $s\in r\cdot\mathbb{G}$. We can check that, for $s,t \in r\cdot \mathbb{G}$,
\begin{equation}\label{simplifiedzr}
Y_{\mathcal{R},U}(s,t) = \dfrac{1}{2}\bigg(2-t_{1}U\mathcal{R}^{-1}\bigg)^{*}\bigg(1_{\mathcal{M}}-t_{U,\mathcal{R}}^{*}s_{U,\mathcal{R}}\bigg)\bigg(2-s_{1}U\mathcal{R}^{-1}\bigg).
\end{equation}
Moreover, note that $w$ in equation (\ref{rearrangedbidiscformula}) respects the symmetry of the involution $\sigma$ by equation (\ref{wsigma}). Hence there exists a holomorphic function $x: r\cdot \mathbb{G}\rightarrow \mathcal{M}$ such that, for all $\lambda\in r\mathbb{D}\times\mathbb{D}$, 
$$ w(\lambda) = x(\lambda_{1}+r\lambda_{2}, r\lambda_{1}\lambda_{2}) = x(s_{1},s_{2})=x(s),$$
using the relations  \eqref{lambda-s}.
Recall that for $f\in\mathcal{S}(\mathbb{G}_{r})$, we have defined 
$$F=f\circ \pi \circ T_{r} : \mathbb{D}^{2} \rightarrow \overline{\mathbb{D}},$$ and so, for   $\lambda \in  r\mathbb{D}\times\mathbb{D}$, 
\begin{equation}\label{F-f}
F(\lambda)= f(\lambda_{1}+r\lambda_{2},r\lambda_{1}\lambda_{2})= f(s_1, s_2)= f(s),
\end{equation}
where $s$ is defined by equations \eqref{lambda-s}.
Therefore, using equations \eqref{F-f} and \eqref{Zr-YRU}, we can re-write the equation (\ref{rearrangedbidiscformula})
in the following form
$$ 1-\overline{f(t)}f(s) = \bigg\langle Y_{\mathcal{R},U}(s,t) x(s), x(t)\bigg\rangle_{\mathcal{M}},$$
for all $s,t\in r\cdot\mathbb{G}$.
Hence, by equation (\ref{simplifiedzr}),
$$1-\overline{f(t)}f(s) = \Bigg\langle \dfrac{1}{2} \bigg(2-t_{1}U\mathcal{R}^{-1}\bigg)^{*}\bigg(1_{\mathcal{M}}-t_{U,\mathcal{R}}^{*}s_{U,\mathcal{R}}\bigg)\bigg(2-s_{1}U\mathcal{R}^{-1}\bigg)x(s),x(t)\Bigg\rangle_{\mathcal{M}}$$
and

\begin{align}\label{model-1}
 1 & -  \overline{f(t)}f(s)= \\
 & \Bigg\langle  \bigg(1_{\mathcal{M}}-t_{U,\mathcal{R}}^{*}s_{U,\mathcal{R}}\bigg)\dfrac{1}{\sqrt{2}}\bigg(2-s_{1}U\mathcal{R}^{-1}\bigg)x(s),\dfrac{1}{\sqrt{2}}\bigg(2-t_{1}U\mathcal{R}^{-1}\bigg)x(t)\Bigg\rangle_{\mathcal{M}}, \nonumber
\end{align}
for all $s,t\in r\cdot\mathbb{G}$.
Define a holomorphic map 
$u:r\cdot \mathbb{G}\rightarrow \mathcal{M}$, by 
\begin{equation}
u(s) = \dfrac{1}{\sqrt{2}}\bigg(2-s_{1}U\mathcal{R}^{-1}\bigg)x(s), \ \text{for all} \ s \in r\cdot \mathbb{G}.
\end{equation}
Thus, by equation \eqref{model-1},
$$1-\overline{f(t)}f(s) = \Bigg\langle  \bigg(1_{\mathcal{M}}-t_{U,\mathcal{R}}^{*}s_{U,\mathcal{R}}\bigg)u(s),u(t)\Bigg\rangle_{\mathcal{M}}
\ \text{for all} \ s, t \in r\cdot \mathbb{G}.$$
Therefore equation (\ref{grcorollary}) is proved.\end{proof}

Theorem \ref{modelforsur} allows us to find a realization for functions in $\mathcal{S}(\mathbb{G}_{r})$. 
\begin{thm}\label{realisationformulaforrG}
Let $ r\in (0,1)$ and $f \in \mathcal{S}(\mathbb{G}_{r})$. There exist a scalar $a$,  
 a complex Hilbert space $\mathcal{M}$,  a closed non-trivial proper subspace  $\mathcal{H}_{1}$ of $\mathcal{M}$,
 vectors $\beta, \gamma, \in \mathcal{M}$, operators $D$ and $U$ on $\mathcal{M}$ such that $U$ is unitary and the operator 
\begin{equation}\label{realizationunitaryforgr}
L=\begin{bmatrix}
    a       & 1\otimes \beta \\
    \gamma \otimes 1       & D 
\end{bmatrix}
\end{equation}
is unitary on $\mathbb{C}\oplus \mathcal{M}$ and, for all $s=(s_{1},s_{2})\in r\cdot\mathbb{G}$,
\begin{equation}\label{realization-1}
f(s)=a+\langle s_{U,\mathcal{R}}(1_{\mathcal{M}}-Ds_{U,\mathcal{R}})^{-1}\gamma, \beta \rangle_{\mathcal{M}},
\end{equation}
where  the operator $s_{U,\mathcal{R}}$ is defined by 
equation \eqref{sUR-form} and the operator $\mathcal{R} \in\mathcal{B(M)}$ given by equation \eqref{RonM-2}.
\end{thm}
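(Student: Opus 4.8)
The plan is to run the classical lurking-isometry argument, taking as input the $\mathbb{G}_r$-model formula established in Theorem \ref{modelforsur}, where $u:r\cdot\mathbb{G}\to\mathcal{M}$ denotes the model map. First I would rewrite the model identity \eqref{grcorollary}, using that $U$ is unitary and $t_{U,\mathcal{R}}^{*}s_{U,\mathcal{R}}$ may be shifted across the inner product, in the form
\[
1 + \ip{s_{U,\mathcal{R}}u(s)}{t_{U,\mathcal{R}}u(t)}_{\mathcal{M}} = \overline{f(t)}f(s) + \ip{u(s)}{u(t)}_{\mathcal{M}},
\]
valid for all $s,t\in r\cdot\mathbb{G}$. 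I would then recognise the two sides as the Gramians, computed in $\mathbb{C}\oplus\mathcal{M}$, of the two families
\[
p(s) = \big(1,\, s_{U,\mathcal{R}}u(s)\big), \qquad q(s) = \big(f(s),\, u(s)\big), \qquad s\in r\cdot\mathbb{G}.
\]
Equality of these Gramians produces a well-defined linear isometry $V$ from $\overline{\spn}\{p(s):s\in r\cdot\mathbb{G}\}$ onto $\overline{\spn}\{q(s):s\in r\cdot\mathbb{G}\}$ determined by $Vp(s)=q(s)$.

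Next I would extend $V$ to a unitary operator $L$ on $\mathbb{C}\oplus\mathcal{M}$. Since the two defect spaces $(\mathbb{C}\oplus\mathcal{M})\ominus\overline{\spn}\{p(s)\}$ and $(\mathbb{C}\oplus\mathcal{M})\ominus\overline{\spn}\{q(s)\}$ need not have equal dimension, I would first enlarge $\mathcal{M}$ to $\mathcal{M}\oplus\mathcal{K}$ with $\mathcal{K}$ an infinite-dimensional Hilbert space, choose a block decomposition $\mathcal{K}=\mathcal{K}_1\oplus\mathcal{K}_1^{\perp}$, and extend $U$ to a unitary and $\mathcal{R}$ to an operator of the form \eqref{RonM-2} acting block-diagonally with respect to $\mathcal{M}\oplus\mathcal{K}$. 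Because $U$ and $\mathcal{R}$ are then block diagonal, the operator $s_{U,\mathcal{R}}$ on the enlarged space is block diagonal and restricts on the original summand to the $s_{U,\mathcal{R}}$ supplied by the model, while $\|s_{U,\mathcal{R}}\|<1$ on the enlarged space by Lemma \ref{sURanalyticlemma}(2). With the defect dimensions now matched, $V$ extends to a unitary $L$ on $\mathbb{C}\oplus\mathcal{M}$, which I write in block form
\[
L = \bbm a & 1\otimes\beta \\ \gamma\otimes 1 & D \ebm,
\]
reading off the scalar $a$, the vectors $\beta,\gamma\in\mathcal{M}$ and the operator $D$ on $\mathcal{M}$; being a corner of a unitary, $D$ is automatically a contraction.

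Finally I would apply $L$ to $p(s)$ and use $Lp(s)=q(s)$ to extract the two identities
\[
f(s) = a + \ip{s_{U,\mathcal{R}}u(s)}{\beta}_{\mathcal{M}}, \qquad u(s) = \gamma + Ds_{U,\mathcal{R}}u(s),
\]
for all $s\in r\cdot\mathbb{G}$. Since $\|Ds_{U,\mathcal{R}}\|\le\|s_{U,\mathcal{R}}\|<1$, the operator $1_{\mathcal{M}}-Ds_{U,\mathcal{R}}$ is invertible, so the second identity gives $u(s)=(1_{\mathcal{M}}-Ds_{U,\mathcal{R}})^{-1}\gamma$; substituting this into the first yields
\[
f(s) = a + \ip{s_{U,\mathcal{R}}(1_{\mathcal{M}}-Ds_{U,\mathcal{R}})^{-1}\gamma}{\beta}_{\mathcal{M}},
\]
which is the asserted realization \eqref{realization-1}.

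I expect the main obstacle to be the extension step: one must enlarge the model space and extend $U$ and $\mathcal{R}$ so that, simultaneously, the defect spaces of $V$ acquire equal dimension (permitting a unitary extension) and $\mathcal{R}$ retains the precise block form \eqref{RonM-2} with $U$ still unitary, so that $s_{U,\mathcal{R}}$ remains governed by \eqref{sUR-form} and Lemma \ref{sURanalyticlemma} continues to apply. Once the block-diagonal extension is set up correctly, the remainder is the routine block-matrix bookkeeping characteristic of the lurking-isometry method.
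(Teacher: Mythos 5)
Your proposal is correct and follows essentially the same route as the paper: the same lurking-isometry argument applied to the model formula of Theorem \ref{modelforsur}, producing the isometry $V$ from the Gramian identity, extending it to a unitary $L$ on $\mathbb{C}\oplus\mathcal{M}$, and solving the resulting pair of equations using $\|s_{U,\mathcal{R}}\|<1$ from Lemma \ref{sURanalyticlemma}. The only difference is cosmetic: where the paper simply says ``enlarge $\mathcal{M}$ if necessary, and simultaneously $U$ and $\mathcal{R}$,'' you spell out the block-diagonal enlargement that matches the defect dimensions while preserving the form \eqref{RonM-2}, which is a legitimate filling-in of the same step rather than a different method.
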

\begin{proof}
By Theorem \ref{modelforsur}, there exists a Hilbert space $\mathcal{M}=\mathcal{H}_{1}\oplus \mathcal{H}_{2}$, a holomorphic map $u: r\cdot \mathbb{G} \rightarrow \mathcal{M}$, a unitary operator $U$ on $\mathcal{M}$ and an operator $\mathcal{R}\in\mathcal{B}(\mathcal{M})$ given by equation \eqref{RonM-2},
 such that, for all $s,t \in r\cdot\mathbb{G}$, 
\begin{equation}\label{realisationsurfirstequation}
1- \overline{f(t)}f(s) =\Bigg\langle  \bigg(1_{\mathcal{M}}-t_{U,\mathcal{R}}^{*}s_{U,\mathcal{R}}\bigg)u(s),u(t)\Bigg\rangle_{\mathcal{M}}.
\end{equation}
Rearrange equation (\ref{realisationsurfirstequation}) to show that, for all $s,t \in r\cdot\mathbb{G}$, 
$$ 1+ \langle s_{U,\mathcal{R}} u(s),t_{U,\mathcal{R}}u(t)\rangle_{\mathcal{M}} = \langle f(s), f(t) \rangle_{\mathbb{C}}+ \langle u(s), u(t) \rangle_{\mathcal{M}}, $$
which is equivalent to
\begin{equation} 
\Bigg\langle \begin{bmatrix}
	1 \\
	s_{U,\mathcal{R}}u(s)\\
\end{bmatrix}, 
\begin{bmatrix}
	1 \\
	t_{U,\mathcal{R}}u(t)\\
\end{bmatrix}
\Bigg\rangle_{\mathbb{C}\oplus\mathcal{M}} = 
\Bigg\langle\begin{bmatrix}
	f(s) \\
	u(s)\\
\end{bmatrix}, 
\begin{bmatrix}
	f(t) \\
	u(t)\\
\end{bmatrix}
\Bigg\rangle_{\mathbb{C}\oplus\mathcal{M}}.
\end{equation}
This means that the two families of vectors 
\begin{equation*}
	\begin{bmatrix}
	1 \\
	s_{U,\mathcal{R}}u(s) 
	\end{bmatrix}_{s\in r\cdot\mathbb{G}}~\text{and}~
	\begin{bmatrix}
	f(s) \\
	u(s)
	\end{bmatrix}_{s\in r\cdot\mathbb{G}}
\end{equation*}
have the same Gramians in $\mathbb{C}\oplus\mathcal{M}$. Hence there exists a linear isometry $L\in \mathcal{B}(\mathbb{C}\oplus \mathcal{M})$ such that
\begin{equation}\nonumber
L : \overline{\spn}\Bigg\{ \begin{bmatrix}
	1 \\
	s_{U,\mathcal{R}}u(s)\\
\end{bmatrix} : s \in r\cdot \mathbb{G} \Bigg\}\rightarrow 
\overline{\spn}\Bigg\{ \begin{bmatrix}
	f(s) \\
	u(s)\\
\end{bmatrix}: s \in r\cdot \mathbb{G} \Bigg\},
\end{equation}
and
\begin{equation}\label{Lmatrixequation}
L\begin{bmatrix}
	1 \\
	s_{U,\mathcal{R}}u(s)\\
\end{bmatrix} = 
\begin{bmatrix}
	f(s) \\
	u(s)\\
\end{bmatrix},
\end{equation}
for all $s\in r\cdot \mathbb{G}$. Enlarge the Hilbert space $\mathcal{M}$ if necessary, and simultaneously the unitary operator $U$ and the operator $\mathcal{R}$ on $\mathcal{M}$, so that the isometry $L$ extends to a unitary operator
\begin{equation}
	\tilde{L} = \begin{bmatrix}
			a & 1 \otimes \beta \\
			\gamma \otimes 1 & D
			\end{bmatrix},
\end{equation}
on $\mathbb{C}\oplus\mathcal{M}$ for some vectors $\beta, \gamma\in\mathcal{M}$, $a\in \mathbb{C}$ and a contraction $D \in \mathcal{B(M)}$. By equation (\ref{Lmatrixequation}), for every $s\in r\cdot \mathbb{G}$,
\begin{align*}
f(s) &= a+(1\otimes \beta) s_{U,\mathcal{R}}u(s), \\
 u(s)&= (\gamma \otimes 1)(1) + D s_{U,\mathcal{R}}u(s).
\end{align*}
Thus, for every $s\in r\cdot \mathbb{G}$,
\begin{align}
f(s) &= a+\langle s_{U,\mathcal{R}}u(s), \beta \rangle_{\mathcal{M}}, \label{systemofequationsforL}  \\
 u(s)&=\gamma+Ds_{U,\mathcal{R}}u(s). \nonumber  
\nonumber
\end{align}
 Since $D$ is a contraction and by Lemma \ref{sURanalyticlemma}, $\|s_{U,\mathcal{R}}\|_{\mathcal{B(M)}}< 1$ for all $s\in r\cdot \mathbb{G}$, we deduce
 that the operator $(1_{\mathcal{M}}-Ds_{U,\mathcal{R}})$ is invertible for all $s\in r\cdot \mathbb{G}$. Therefore 
 $$ u(s) = (1_{\mathcal{M}}-Ds_{U,\mathcal{R}})^{-1}\gamma, \ \text{for} \ s\in r\cdot \mathbb{G},$$ 
 and so  we can eliminate $u(s)$ from the system of equations (\ref{systemofequationsforL}) to get the following formula 
$$f(s)=a+\langle s_{U,\mathcal{R}}(1_{\mathcal{M}}-Ds_{U,\mathcal{R}})^{-1}\gamma, \beta \rangle_{\mathcal{M}},$$
for all $s\in r\cdot \mathbb{G}$. \end{proof}

We now show that every function $f:r\cdot\mathbb{G} \rightarrow \mathbb{C}$ that has a realization formula \eqref{realization-1} belongs to $\mathcal{S}(r\cdot\mathbb{G})$.
\begin{thm}\label{converseforrdotg}
    Let $\mathcal{M}$ be a complex Hilbert space, let $\mathcal{H}_{1}$ be  a closed non-trivial proper subspace,
       let $\beta, \gamma\in\mathcal{M}$ and let $D$ and $U$ be operators on $\mathcal{M}$ such that $U$ is unitary, the operator 
\begin{equation}\label{thisunitarymatrix}
L=\begin{bmatrix}
    a       & 1\otimes \beta \\
    \gamma \otimes 1       & D 
\end{bmatrix}
\end{equation}
is unitary on $\mathbb{C}\oplus \mathcal{M}$ and let $f:r\cdot \mathbb{G}\rightarrow \mathbb{C}$ be defined by
\begin{equation}\label{realizationformulaforthisproof}
f(s)=a+\langle s_{U,\mathcal{R}}(1_{\mathcal{M}}-Ds_{U,\mathcal{R}})^{-1}\gamma, \beta \rangle_{\mathcal{M}}~~\text{for all}~s\in r\cdot\mathbb{G},
\end{equation}
where 
\begin{equation}\label{surequationrealization}
    s_{U,\mathcal{R}} = \bigg(2s_{2}\mathcal{R}^{-1}U-s_{1}\bigg)\bigg(2\mathcal{R}-s_{1}U\bigg)^{-1}
\end{equation}
and  the operator $\mathcal{R}\in\mathcal{B}(\mathcal{M})$ is given by equation \eqref{RonM-2}.
Then $f\in\mathcal{S}(r\cdot\mathbb{G})$.
\end{thm}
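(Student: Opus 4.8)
The plan is to verify the two defining properties of the Schur class $\mathcal{S}(r\cdot\mathbb{G})$ in turn: first that $f$ is holomorphic on $r\cdot\mathbb{G}$, and then that $|f(s)|\le 1$ for every $s\in r\cdot\mathbb{G}$. For holomorphy I would appeal directly to Lemma \ref{sURanalyticlemma}. Part (1) gives that $s\mapsto s_{U,\mathcal{R}}$ is holomorphic on $r\cdot\mathbb{G}$, part (2) gives $\|s_{U,\mathcal{R}}\|<1$, and part (3) — applied with the given contraction $D$ and the vector $\gamma$ — gives that $u(s):=(1_{\mathcal{M}}-Ds_{U,\mathcal{R}})^{-1}\gamma$ is a well-defined holomorphic $\mathcal{M}$-valued function on $r\cdot\mathbb{G}$. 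Since the inner product is continuous and $s\mapsto s_{U,\mathcal{R}}u(s)$ is holomorphic, the formula $f(s)=a+\langle s_{U,\mathcal{R}}u(s),\beta\rangle_{\mathcal{M}}$ exhibits $f$ as a holomorphic scalar function.

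For the bound, the key is to reverse the lurking-isometry computation of Theorem \ref{realisationformulaforrG}. Writing $u(s)=(1_{\mathcal{M}}-Ds_{U,\mathcal{R}})^{-1}\gamma$, so that $u(s)=\gamma+Ds_{U,\mathcal{R}}u(s)$, I would read off from the block form \eqref{thisunitarymatrix} of $L$ that
$$
L\begin{bmatrix} 1\\ s_{U,\mathcal{R}}u(s)\end{bmatrix}=\begin{bmatrix} a+\langle s_{U,\mathcal{R}}u(s),\beta\rangle_{\mathcal{M}}\\ \gamma+Ds_{U,\mathcal{R}}u(s)\end{bmatrix}=\begin{bmatrix} f(s)\\ u(s)\end{bmatrix}
$$
for every $s\in r\cdot\mathbb{G}$, where I use $(1\otimes\beta)h=\langle h,\beta\rangle_{\mathcal{M}}$ and $(\gamma\otimes 1)(1)=\gamma$.

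Because $L$ is unitary, and hence isometric, the Gramians of the two families agree: taking the inner product of the images for parameters $s$ and $t$ and equating it with the inner product of the preimages in $\mathbb{C}\oplus\mathcal{M}$ yields, after expansion,
$$
1+\langle s_{U,\mathcal{R}}u(s),t_{U,\mathcal{R}}u(t)\rangle_{\mathcal{M}}=\overline{f(t)}f(s)+\langle u(s),u(t)\rangle_{\mathcal{M}},
$$
which rearranges to the model identity $1-\overline{f(t)}f(s)=\langle(1_{\mathcal{M}}-t_{U,\mathcal{R}}^{*}s_{U,\mathcal{R}})u(s),u(t)\rangle_{\mathcal{M}}$. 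Setting $t=s$ gives $1-|f(s)|^{2}=\|u(s)\|^{2}-\|s_{U,\mathcal{R}}u(s)\|^{2}$, and since $\|s_{U,\mathcal{R}}\|<1$ by Lemma \ref{sURanalyticlemma}(2) we have $\|s_{U,\mathcal{R}}u(s)\|\le\|s_{U,\mathcal{R}}\|\,\|u(s)\|\le\|u(s)\|$, whence $1-|f(s)|^{2}\ge 0$, i.e. $|f(s)|\le 1$, for all $s\in r\cdot\mathbb{G}$. Together with holomorphy this proves $f\in\mathcal{S}(r\cdot\mathbb{G})$.

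I do not expect a genuine obstacle here, since the whole argument simply runs the Gramian/isometry computation of Theorem \ref{realisationformulaforrG} backwards, and all the analytic content — well-definedness, holomorphy, invertibility of $1_{\mathcal{M}}-Ds_{U,\mathcal{R}}$, and the strict contractivity $\|s_{U,\mathcal{R}}\|<1$ — is already supplied by Lemma \ref{sURanalyticlemma}. The only points needing a little care are the correct bookkeeping of the tensor notation $1\otimes\beta$ and $\gamma\otimes 1$ when unpacking $L$, and the observation that the invertibility used to define $u(s)$ follows from $D$ being a contraction together with $\|s_{U,\mathcal{R}}\|<1$.
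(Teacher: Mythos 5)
Your proposal is correct and follows essentially the same route as the paper: holomorphy from Lemma \ref{sURanalyticlemma} (parts (1)--(3)), then the reversed lurking-isometry computation $L\bigl[\begin{smallmatrix}1\\ s_{U,\mathcal{R}}u(s)\end{smallmatrix}\bigr]=\bigl[\begin{smallmatrix}f(s)\\ u(s)\end{smallmatrix}\bigr]$, the Gramian identity from unitarity of $L$, and the diagonal case $t=s$ together with $\|s_{U,\mathcal{R}}\|<1$ to conclude $|f(s)|\le 1$. The only point worth making explicit (which the paper does and you use tacitly when calling $D$ a contraction) is that $\|D\|\le 1$ because $D$ is a block of the unitary $L$.
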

\begin{proof} Let us show that the map $f$ given by equation (\ref{realizationformulaforthisproof}) is well defined and holomorphic on  $r\cdot\mathbb{G}$.  By  Lemma \ref{sURanalyticlemma} (1) and (2),
the operator-valued function 
     $$w:r\cdot\mathbb{G}\rightarrow \mathcal{B(M)} : s\mapsto s_{U,\mathcal{R}},$$  
is well defined and holomorphic on $r\cdot\mathbb{G}$ and $\|s_{U,\mathcal{R}}\|_{\mathcal{B(M)}}< 1$ for all $s=(s_{1},s_{2})\in r\cdot\mathbb{G}$.
Since $L$ is a unitary matrix, $\|D\|_{\mathcal{B}(\mathcal{M})}\leq 1.$ Therefore, by Lemma \ref{sURanalyticlemma} (3),
for every $\gamma\in\mathcal{M}$, the $\mathcal{M}$-valued function $$u:r\cdot\mathbb{G}\rightarrow \mathcal{M} \ \text{ defined by} \ u(s) = (1_{\mathcal{M}}-Ds_{U,\mathcal{R}})^{-1}\gamma$$ is holomorphic on $r\cdot\mathbb{G}$. Hence, $f$ is holomorphic on $r\cdot\mathbb{G}$.

To prove that $\lvert f(s)\rvert \leq 1$ on $r \cdot \mathbb{G}$, note that for all $s\in r\cdot\mathbb{G}$, 
$$ L\begin{bmatrix}
    1 \\
    s_{U,\mathcal{R}}u(s)
\end{bmatrix} = \begin{bmatrix}
    a+(1\otimes\beta)s_{U,\mathcal{R}} \\
    \gamma +Ds_{U,\mathcal{R}}u(s)
\end{bmatrix}= \begin{bmatrix}
    f(s) \\
    u(s)
\end{bmatrix}.$$
Since $L$ is unitary, 
$$\bigg\langle \begin{bmatrix}
    f(s)\\
    u(s)
\end{bmatrix}, \begin{bmatrix}
    f(t)\\
    u(t)
\end{bmatrix} \bigg\rangle_{\mathbb{C}\oplus\mathcal{M}} = \bigg\langle \begin{bmatrix}
    1\\
    s_{U,\mathcal{R}}u(s)
\end{bmatrix}, \begin{bmatrix}
       1\\
    t_{U,\mathcal{R}}u(t)
\end{bmatrix} \bigg\rangle_{\mathbb{C}\oplus\mathcal{M}}~\text{for all}~s,t\in r\cdot\mathbb{G}. $$
By a reshuffle of the above equation, this defines a model $(\mathcal{M},u)$ for the function $f$ on $r\cdot\mathbb{G}$, that is, 
$$1-\overline{f(t)}f(s) = \bigg\langle(1_{\mathcal{M}}-t^{*}_{U,\mathcal{R}}s_{U,\mathcal{R}})u(s),u(t)\bigg\rangle_{\mathcal{M}}~~\text{for}~s,t\in r\cdot\mathbb{G}.$$
Let $t=s$ in the model equation above for $f$. Then
$$1-\lvert f(s) \rvert^{2} =\bigg\langle(1_{\mathcal{M}}-s^{*}_{U,\mathcal{R}}s_{U,\mathcal{R}})u(s),u(s)\bigg\rangle_{\mathcal{M}}.$$
Since $s_{U,\mathcal{R}}$ is a strict contraction for all $s\in r\cdot \mathbb{G}$, we have $1-s^{*}_{U,\mathcal{R}}s_{U,\mathcal{R}}\geq 0$ and thus
$$1-\lvert f(s) \rvert^{2} \geq 0~\text{for all}~~s\in r\cdot \mathbb{G}.$$
Hence $f\in\mathcal{S}(r\cdot\mathbb{G})$.\end{proof}

\section{A realization formula and a Pick theorem for functions in $\mathcal{S}(r\cdot\mathbb{G})$}\label{Pick}

When we restrict our attention to functions from $\mathcal{S}(r\cdot\mathbb{G})$, we can use a different method to derive a realization formula  and a Pick theorem for them.

Let $r\in(0,1)$.  There exists a biholomorphic ``scaling map" between  $\mathbb{G}$ and $r\cdot\mathbb{G}$ 
$$ \psi_r: \mathbb{G} \to r\cdot\mathbb{G} \ \text{given by} \ \psi_r(z_1, z_2)= (rz_1, r^2 z_2).$$
Hence we can deduce a number of statements about  $f\in\mathcal{S}(r \cdot \mathbb{G})$ directly from known facts about holomorphic functions on $\mathbb{G}$, see \cite{AY17}. 

For example, if $f\in\mathcal{S}(r \cdot \mathbb{G})$, then $ f\circ\psi_r \in\mathcal{S}(\mathbb{G})$, and so, by \cite[Theorem 2.2]{AY17},  $ f\circ\psi_r$ has a  $\mathbb{G}$-{model}  $(\m,T,u)$, where $\calm$ is a Hilbert space, $T$ is a contraction acting on $\calm$ and $u:\mathbb{G} \to \m$ is a holomorphic function such that, for all $q,p\in \mathbb{G}$,
\begin{equation}\label{modelform-2}
 1-\overline{f\circ\psi_r (p)}f\circ\psi_r (q)= \ip{ (1-p_T^* q_T) u(q)}{u(p)}_\calm.
\end{equation}
Here, for any point $q=(q_1,q_2)\in \mathbb{G}$ and any contractive linear operator $T$ on a Hilbert space $\calm$, the operator $q_T$ is defined by
\begin{equation}\label{defsU-2}
q_T=(2q_2T-q_1)(2-q_1 T)\inv \quad \mbox{ on } \calm.
\end{equation}
For any $s,t\in r \cdot \mathbb{G}$, apply formula \eqref{modelform-2} to $ q= \psi_r^{-1} (s), p = \psi_r^{-1} (t)$ and observe that
\begin{equation}\label{defsU-3}
q_T= (\psi_r^{-1} (s))_T=(2 r^{-2} s_2 T-r^{-1}s_1)(2-r^{-1}s_1 T)\inv = r^{-1} s_{r^{-1} T}  \quad \mbox{ on } \calm.
\end{equation}
Note that the operator $s_{r^{-1} T}$ is well defined  for $s\in r \cdot \mathbb{G}$ and a contractive linear operator $T$.
Then equation  \eqref{modelform-2} implies that, for all  $s,t\in r \cdot \mathbb{G}$,
\begin{equation}\label{modelform-3}
 1-\overline{f(s)}f(t)= \ip{ (1- r^{-2}  t_{r^{-1} T}^* s_{r^{-1} T}) u(\psi_r^{-1} (s))}{u(\psi_r^{-1} (t))}_\calm.
\end{equation}
Therefore we obtain a model formula $(\m,X,v)$ for $f\in\mathcal{S}(r \cdot \mathbb{G})$, where $\calm$ is a Hilbert space, $X=r^{-1} T $ is an operator acting on $\calm$ with $\|X\| \leq r^{-1}$  and $v:r \cdot \mathbb{G} \to \m$, given by  $v = u \circ \psi_r^{-1}$, is a holomorphic function such that, for all  $s,t\in r \cdot \mathbb{G}$,
\begin{equation}\label{modelform-4}
 1-\overline{f(s)}f(t)= \ip{ (1- r^{-2}  t_{X}^* s_{X}) v(s)}{v (t)}_\calm.
\end{equation}
 
We can also use known facts about functions from $\mathcal{S}(\mathbb{G})$ get a realization formula for functions from 
$\mathcal{S}(r \cdot \mathbb{G})$ and a natural variant of the classical Pick interpolation theorem in which the interpolation nodes lie in $r \cdot \mathbb{G}$.
\begin{thm}\label{interpolationGr} Let  $r\in(0,1)$.
    Let $s_{1},s_{2},\hdots s_{n}$ be distinct points in $r\cdot\mathbb{G}$ and let $w_{1},w_{2},\hdots,w_{n}$ be points in $\overline{\mathbb{D}}$. The following statements are equivalent:  
\begin{enumerate}
    \item There exists a holomorphic function $\varphi\in\mathcal{S}(r\cdot\mathbb{G})$
  such that $\varphi(s_{i})=w_{i}$ for $i=1,\hdots,n$;
    \item There exist a Hilbert space $\mathcal{M}$, an operator $X$  on $\mathcal{M}$ with 
$\|X\| \leq r^{-1}$ and vectors $v_{1},v_{2},\hdots,v_{n}$ in $\mathcal{M}$ such that 
\begin{equation}\label{interpolationmodel}
1-\overline{w_{i}}w_{j}=\bigg\langle \Big(1_{\mathcal{M}}-r^{-2} (s_{i})^{*}_{X} (s_{j})_{X}\Big)v_{j},v_{i}\bigg\rangle_{\mathcal{M}}
\end{equation}
for $i,j=1,\hdots,n$.
\end{enumerate} 
\end{thm}
\begin{proof}
It follows from Theorem 5.1 of \cite{AY17}. \end{proof}

\section{Examples of functions in $\mathcal{S}(r\cdot \mathbb{G})$}

We now make use of the realization formula, Theorem \ref{converseforrdotg}, to give explicit examples of functions in $\mathcal{S}(r\cdot \mathbb{G})$.

\begin{example}\label{rankonematricesgr}{\normalfont{
Let $r\in(0,1)$, let $\mathcal{M}=\mathbb{C}^{2}$ and let $U$ be the unitary operator on $\mathbb{C}^{2}$ given by 
$$U=\begin{bmatrix}
\omega_{1} & 0 \\
0 & \omega_{2} \\
\end{bmatrix}$$
for some  $\omega_{1},\omega_{2}\in\mathbb{T}$. 
Let $a\in\mathbb{C}$, let $\gamma, \beta$ be vectors in $\mathbb{C}^{2}$, 
and let $D=u\otimes v$ be an operator on $\mathbb{C}^{2}$, where  $u, v$ are vectors in $\mathbb{C}^{2}$ with $\|u\|_{\mathbb{C}^{2}}=\|v\|_{\mathbb{C}^{2}}$. Let the operator 
\begin{equation}\label{thisunitarymatrix-2}
L=\begin{bmatrix}
    a       & 1\otimes \beta \\
    \gamma \otimes 1       & D 
\end{bmatrix}
\end{equation}
be unitary on $\mathbb{C}\oplus \mathbb{C}^{2}$. 
Note that since $L$ is unitary, the following conditions on $a, \gamma, \beta, u, v$ are satisfied
\begin{enumerate}
    \item $a=0$;
    \item $\|\gamma\|=\|\beta\|=1$;
    \item $\|u\|=\|v\|=1$;
    \item $\{\gamma,u\}$ and $\{\beta,v\}$ are orthonormal bases of $\mathbb{C}^{2}$.
\end{enumerate}
Then, by Theorem \ref{converseforrdotg}, 
\begin{equation}\label{realiz-example}
f(s)=a+\langle s_{U,\mathcal{R}}(1_{\mathbb{C}^{2}}-Ds_{U,\mathcal{R}})^{-1}\gamma, \beta \rangle_{\mathbb{C}^{2}}, \ \text{for all} \ s\in r\cdot\mathbb{G},
\end{equation}
belongs to $\mathcal{S}(r\cdot \mathbb{G})$. 
Here $s_{U,\mathcal{R}}$ is defined by equation (\ref{surequationrealization}). Let us show that in this case, the function $f$ can be expressed by the following formula 
\small{
\begin{equation}\label{generalformulaforrank1}
    f(s)=\dfrac{\Bigg\langle \begin{bmatrix}
    \phi_{\omega_{1}}(s)(1-u_{2}\overline{v_{2}}r^{-1}\phi_{\omega_{2}r^{-1}}(s)) & r^{-1}u_{1}\overline{v_{2}}\phi_{\omega_{1}}(s)
    \phi_{\omega_{2}r^{-1}}(s)\\
    r^{-1}u_{2}\overline{v_{1}}\phi_{\omega_{1}}(s)\phi_{\omega_{2}r^{-1}}(s) & r^{-1}\phi_{\omega_{2}r^{-1}}(s)
    (1-u_{1}\overline{v_{1}} \phi_{\omega_{1}}(s))\\
\end{bmatrix}\gamma,\beta\Bigg\rangle_{\mathbb{C}^{2}}}{1-u_{1}\overline{v_{1}}\phi_{\omega_{1}}(s)-u_{2}\overline{v_{2}}
r^{-1}\phi_{\omega_{2}r^{-1}}(s)}
\end{equation}  }
for all $s\in r\cdot\mathbb{G}$. Here, for $s=(s_1,s_2)$,
\begin{equation}\label{defszforgr}
\phi_{z}(s) = \dfrac{s_{2}z-\frac{1}{2}s_{1}}{1-\frac{1}{2}s_{1}z}~~\text{for}~z\in\mathbb{C}~\text{such that}~1-\frac{1}{2}s_{1}z\neq0.
\end{equation}
}}
\end{example}
\begin{proof} To use Theorem \ref{converseforrdotg}, we have to be sure that all the parameters given above ensure that the matrix
\begin{equation}
L=\begin{bmatrix}
    a       & 1\otimes \beta \\
    \gamma \otimes 1       & D 
\end{bmatrix}
\end{equation}
is unitary on $\mathbb{C}\oplus \mathbb{C}^{2}$, that is, 
\begin{equation}\label{L-unitary}
LL^{*}=L^{*}L=I_{\mathbb{C}\oplus\mathbb{C}^{2}}.
\end{equation}
We have
\begin{equation}\label{LL*}
LL^{*} = \begin{bmatrix}
    |a|^{2}+\|\beta\|^{2}_{\mathbb{C}^{2}} & a\otimes\gamma+(1\otimes\beta)D^{*}\\
    \overline{a}(\gamma\otimes1)+D(\beta\otimes 1) & \gamma\otimes\gamma +DD^{*}
\end{bmatrix}
\end{equation}
and 
\begin{equation}\label{L*L}
L^{*}L = \begin{bmatrix}
    |a|^{2}+\|\gamma\|^{2}_{\mathbb{C}^{2}} & \overline{a}\otimes\beta+(1\otimes\gamma)D\\
    a(\beta\otimes1)+D^{*}(\gamma\otimes 1) & \beta\otimes\beta +D^{*}D
\end{bmatrix}.
\end{equation}
Since $L$ is unitary, using equations \eqref{LL*} and \eqref{L*L}, we can obtain  the following system of equations 
for $a, \gamma, \beta, u, v$.
\begin{align}
    1 &= \lvert a \rvert^{2}+\|\beta\|^{2} = \lvert a \rvert^{2}+\|\gamma\|^{2} \label{equation1here}\\
    0 &= a\otimes \gamma+\langle v,\beta\rangle_{\mathbb{C}^{2}}(1\otimes u) = \overline{a}\otimes\beta+\langle u,\gamma\rangle_{\mathbb{C}^{2}}(1\otimes v) \label{equation2here}\\
    0 &=\overline{a}\gamma+\langle\beta,v\rangle_{\mathbb{C}^{2}}u = a\beta + \langle \gamma,u \rangle_{\mathbb{C}^{2}} v \label{equationinthislistforunitary}\\
    1_{\mathbb{C}^{2}} &= \gamma\otimes\gamma + \|v\|_{\mathbb{C}^{2}}^{2}(u\otimes u) = \beta\otimes\beta + \|u\|_{\mathbb{C}^{2}}^{2}(v\otimes v).\label{equation3here}
\end{align}
We claim that this system of equations forces: 
\begin{enumerate}
    \item $a=0$;
    \item $\|\gamma\|=\|\beta\|=1$;
    \item $\|u\|=\|v\|=1$;
    \item $\{\gamma,u\}$ and $\{\beta,v\}$ are orthonormal bases of $\mathbb{C}^{2}$.
\end{enumerate}
We prove statement (1) by contradiction. Suppose that $a\neq 0$. From equation (\ref{equationinthislistforunitary}), 
$$0=a\beta+\langle \gamma, u\rangle_{\mathbb{C}^{2}}v.$$
Thus,
$$\beta=-a^{-1}\langle \gamma,u\rangle_{\mathbb{C}^{2}}v$$
and 
$$\beta\otimes\beta = a^{-2}\lvert \langle \gamma,u\rangle_{\mathbb{C}^{2}}\rvert^{2}(v\otimes v).$$
From equation (\ref{equation3here}) with the expression for $\beta\otimes\beta$ above, we have 
$$1_{\mathbb{C}^{2}} = (a^{-2}\lvert \langle \gamma,u\rangle_{\mathbb{C}^{2}}\rvert^{2}+\|u\|_{\mathbb{C}^{2}}^{2})(v\otimes v).$$
This is a contradiction, as $v\otimes v$ is a rank $1$ matrix on $\mathbb{C}^{2}$ and $1_{\mathbb{C}^{2}}$ has rank $2$. Thus $a=0$ necessarily.

Statement (2) follows from equation (\ref{equation1here}), since $a=0$, $\|\gamma\|_{\mathbb{C}^{2}}=\|\beta\|_{\mathbb{C}^{2}}=1$. Moreover, equation (\ref{equation2here}) becomes
$$0 = \langle v,\beta\rangle_{\mathbb{C}^{2}}(1\otimes u) = \langle u,\gamma\rangle_{\mathbb{C}^{2}}(1\otimes v). $$
By the equation above, for all $x\in\mathbb{C}^{2}$,
\begin{align}
    0 &= \langle v,\beta\rangle_{\mathbb{C}^{2}}\langle x,u\rangle_{\mathbb{C}^{2}} \label{orthogonalcondition1} \\
    0 &= \langle u,\gamma\rangle_{\mathbb{C}^{2}}\langle x,v\rangle_{\mathbb{C}^{2}}\label{orthogonalcondition2}.
\end{align}
Equation (\ref{orthogonalcondition2}) implies $u$ is orthogonal to $\gamma$ and equation (\ref{orthogonalcondition1}) implies $v$ is orthogonal to $\beta$. Together, $\{\gamma,u\}$ and $\{\beta,v\}$ are respectively orthogonal in $\mathbb{C}^{2}$. In fact, $\{\gamma,u\}$ and $\{\beta,v\}$ are orthonormal bases of $\mathbb{C}^{2}$; indeed, by equation (\ref{equation3here}), for all $x\in\mathbb{C}^{2}$,
\begin{align}
x &= \langle x,\beta\rangle_{\mathbb{C}^{2}}\beta + \|u\|_{\mathbb{C}^{2}}^{2}\langle x,v \rangle_{\mathbb{C}^{2}} v\label{perpequations1}\\
x &= \langle x,\gamma\rangle_{\mathbb{C}^{2}}\gamma + \|v\|_{\mathbb{C}^{2}}^{2}\langle x,u\rangle_{\mathbb{C}^{2}}u.\label{perpequations2}  
\end{align}
Let $x=v$ in equation (\ref{perpequations1}), we have 
$$v=\|u\|_{\mathbb{C}^{2}}^{2}\|v\|^{2}_{\mathbb{C}^{2}}v.$$
By the assumption $\|u\|_{\mathbb{C}^{2}}=\|v\|_{\mathbb{C}^{2}}$ and by the equation above,
$$1=\|u\|_{\mathbb{C}^{2}}\|v\|_{\mathbb{C}^{2}} = \|u\|_{\mathbb{C}^{2}}^{2}=\|v\|_{\mathbb{C}^{2}}^{2}.$$
Therefore $\|u\|_{\mathbb{C}^{2}}=\|v\|_{\mathbb{C}^{2}}=1$.

We can now utilise the realization formula \eqref{realiz-example}
\begin{equation}\label{realiz-example-2}
f(s)=a+\langle s_{U,\mathcal{R}}(1_{\mathbb{C}^{2}}-Ds_{U,\mathcal{R}})^{-1}\gamma, \beta \rangle_{\mathbb{C}^{2}}, \ \text{for all} \ s\in r\cdot\mathbb{G}.
\end{equation}
Under our assumptions, we have shown that $a$ has to be equal to $0$. By assumption, 
$$D =u\otimes v = \begin{bmatrix}
    u_{1}\overline{v_{1}} & u_{1}\overline{v_{2}} \\
    u_{2}\overline{v_{1}} & u_{2}\overline{v_{2}} \\
\end{bmatrix}.$$
For $U=\begin{bmatrix}
\omega_{1} & 0 \\
0 & \omega_{2} \\
\end{bmatrix}$ and for $s= (s_1, s_2) \in r\cdot\mathbb{G}$,
\begin{align}
s_{U,\mathcal{R}} = & \bigg(2s_{2}\mathcal{R}^{-1}U-s_{1}\bigg)\bigg(2\mathcal{R}-s_{1}U\bigg)^{-1}\\
  =&  \begin{bmatrix}
   \dfrac{s_{2} \omega_1-\frac{1}{2}s_{1}}{1-\frac{1}{2}s_{1} \omega_1}  & 0 \\
    0 &  r^{-1} \dfrac{s_{2}\omega_{2}r^{-1} -\frac{1}{2}s_{1}}{1-\frac{1}{2}s_{1} \omega_{2}r^{-1}   } \\
\end{bmatrix}.
\end{align}
Let us use the notation
$$
\phi_{z}(s) = \dfrac{s_{2}z-\frac{1}{2}s_{1}}{1-\frac{1}{2}s_{1}z} \ \text{for} \ z\in\mathbb{C} \ \text{such that}~1-\frac{1}{2}s_{1}z\neq0.
$$
Thus, for $s= (s_1, s_2) \in r\cdot\mathbb{G}$, 
$$ s_{U,\mathcal{R}} = \begin{bmatrix}
 \phi_{\omega_1}(s)   & 0 \\
    0 &  r^{-1}  \phi_{r^{-1}\omega_2}(s)  \\
\end{bmatrix}.
$$
Therefore
$$1_{\mathbb{C}^{2}}-( u\otimes v )s_{U,\mathcal{R}} = \begin{bmatrix}
    1-u_{1}\overline{v_{1}}\phi_{\omega_{1}}(s) & -u_{1}\overline{v_{2}}r^{-1}\phi_{\omega_{2}r^{-1}}(s)\\
    -u_{2}\overline{v_{1}}\phi_{\omega_{1}}(s) & 1-u_{2}\overline{v_{2}}r^{-1}\phi_{\omega_{2}r^{-1}}(s)
\end{bmatrix}.$$
Note that 
$$\mathrm{det}(1_{\mathbb{C}^{2}}-( u\otimes v )s_{U,\mathcal{R}}) =1-u_{1}\overline{v_{1}}\phi_{\omega_{1}}(s)-u_{2}\overline{v_{2}}r^{-1}\phi_{\omega_{2}r^{-1}}(s).$$
Hence, so long as $\mathrm{det}(1_{\mathbb{C}^{2}}-( u\otimes v )s_{U,\mathcal{R}})\neq 0$, $1_{\mathbb{C}^{2}}-( u\otimes v )s_{U,\mathcal{R}}$ is invertible and is given by 
\begin{align*}
    (1_{\mathbb{C}^{2}} &-( u\otimes v )s_{U,\mathcal{R}})^{-1}  \\   
    &= \big[\mathrm{det}(1_{\mathbb{C}^{2}}-( u\otimes v )s_{U,\mathcal{R}})\big]^{-1}\begin{bmatrix}
    1-u_{2}\overline{v_{2}}r^{-1}\phi_{\omega_{2}r^{-1}}(s) & u_{1}\overline{v_{2}}r^{-1}\phi_{\omega_{2}r^{-1}}(s)\\
    u_{2}\overline{v_{1}}\phi_{\omega_{1}}(s)  & 1-u_{1}\overline{v_{1}}\phi_{\omega_{1}}(s)
\end{bmatrix} \\
&=\dfrac{\begin{bmatrix}
    1-u_{2}\overline{v_{2}}r^{-1}\phi_{\omega_{2}r^{-1}}(s) & u_{1}\overline{v_{2}}r^{-1}\phi_{\omega_{2}r^{-1}}(s) \\
    u_{2}\overline{v_{1}}\phi_{\omega_{1}}(s)  & 1-u_{1}\overline{v_{1}}\phi_{\omega_{1}}(s) 
\end{bmatrix}}{1-u_{1}\overline{v_{1}}\phi_{\omega_{1}}(s) -u_{2}\overline{v_{2}}r^{-1}\phi_{\omega_{2}r^{-1}}(s) }.
\end{align*}
Therefore,  the function $f$ given by equation \eqref{realiz-example-2} is defined by 
\small{
\begin{equation*}
    f(s)=\dfrac{\Bigg\langle \begin{bmatrix}
    \phi_{\omega_{1}}(s) (1-u_{2}\overline{v_{2}}r^{-1}\phi_{\omega_{2}r^{-1}}(s) ) & r^{-1}u_{1}\overline{v_{2}}\phi_{\omega_{1}}(s) \phi_{\omega_{2}r^{-1}}(s) \\
    r^{-1}u_{2}\overline{v_{1}}\phi_{\omega_{1}}(s) \phi_{\omega_{2}r^{-1}}(s)  & r^{-1}\phi_{\omega_{2}r^{-1}}(s) (1-u_{1}\overline{v_{1}}\phi_{\omega_{1}}(s) )\\
\end{bmatrix}\gamma,\beta\Bigg\rangle_{\mathbb{C}^{2}}}{1-u_{1}\overline{v_{1}}\phi_{\omega_{1}}(s) -u_{2}\overline{v_{2}}r^{-1}\phi_{\omega_{2}r^{-1}}(s) }
\end{equation*} }
for all $s\in r\cdot\mathbb{G}$. By  Theorem \ref{converseforrdotg}, this function $f$ belongs to  $\mathcal{S}(r\cdot \mathbb{G})$.\end{proof}

\begin{example}\label{example2} {\normalfont
    For any $r\in(0,1)$ and $\omega\in\mathbb{T}$, the function $\Upsilon_{\omega,r}$ defined by
\begin{equation}\label{grexample}
    \Upsilon_{\omega,r}(s)=\dfrac{s_{2}\omega r^{-1}-\dfrac{1}{2}s_{1}}{1-\dfrac{1}{2}s_{1}\omega r^{-1}}r^{-1}, \ \text{for all} \ 
    s=(s_{1},s_{2})\in r\cdot\mathbb{G},    
\end{equation}
belongs to $\mathcal{S}(r\cdot\mathbb{G})$. 
\begin{proof}
In Example \ref{rankonematricesgr} take $\omega_{1}=\omega_{2}=\omega$ to be complex numbers on the unit circle and the vectors $\beta=\gamma=e_{2}$ and $u=v=e_{1}$, where 
$$e_{1}=\begin{bmatrix}
    1 \\
    0
\end{bmatrix},~e_{2}=\begin{bmatrix}
    0 \\
    1
\end{bmatrix},$$ 
the standard orthonormal bases in $\mathbb{C}^{2}$.
Then $\Upsilon_{\omega,r}\in\mathcal{S}(r\cdot \mathbb{G})$ and has the form given in equation (\ref{grexample}). \end{proof}}
\end{example}

The next example gives us $\Phi_{\omega}$ with $\omega\in\mathbb{T}$, which  is the familiar ``magic function" for $\mathbb{G}$,  see Agler and Young \cite{HGSB}. The functions $\Phi_{\omega}$, $\omega\in\mathbb{T}$, where $\Phi_{\omega}(s,p)= \dfrac{2\omega p-s}{2-\omega s}$ for $(s,p)\in\mathbb{G}$, were called ``magic functions" by Agler in recognition of their power as a tool to prove facts about $\mathbb{G}$. The main application of magic functions in \cite{HGSB,ay2008}, was to identify all automorphisms of $\mathbb{G}$, and they are also central to the solution of the Carath\'{e}odory extremal problem for $\mathbb{G}$.

Note that $\Upsilon_{\omega,r}$ from Example \ref{example2} reduces to the equation 
\begin{equation}\label{Upsilon}
    \Upsilon_{\omega,r}(s)=\Phi_{\omega r^{-1}}(s)r^{-1} \ \text{for all} \    s=(s_{1},s_{2})\in r\cdot\mathbb{G}.    
\end{equation}

\begin{example}{\normalfont{
For any $\omega\in\mathbb{T}$, the function defined by
$$\Phi_{\omega}(s) = \dfrac{s_{2}\omega-\frac{1}{2}s_{1}}{1-\frac{1}{2}s_{1}\omega}$$
for all $s=(s_{1},s_{2})\in\mathbb{G}$, belongs to $\mathcal{S}(\mathbb{G})$, and so to $\mathcal{S}(r\cdot \mathbb{G})$.
}}
\end{example}
\begin{proof}
In Example \ref{rankonematricesgr}, take $\omega_{1}=\omega_{2}=\omega$ to be a complex number on the unit circle, $\gamma=\beta=e_{1}$ and $u=v=e_{2}$, the standard basis of $\mathbb{C}^{2}$. Then the description of the function $f$ from equation (\ref{generalformulaforrank1}) gives us
\begin{align*}
    f(s)&=\dfrac{\Bigg\langle \begin{bmatrix}
    \phi_{\omega}(s) (1-r^{-1}\phi_{\omega r^{-1}}(s) ) & 0\\
    0 & 0\\
\end{bmatrix}e_{1},e_{1}\Bigg\rangle_{\mathbb{C}^{2}}}{(1-r^{-1}\phi_{\omega r^{-1}}(s) )}\\
&=\dfrac{\phi_{\omega}(s)(1-r^{-1}\phi_{\omega r^{-1}}(s) )}{(1-r^{-1}\phi_{\omega r^{-1}}(s) )} \\
&= \phi_{\omega}(s) = \dfrac{s_{2}\omega-\frac{1}{2}s_{1}}{1-\frac{1}{2}s_{1}\omega}\\
&= \Phi_{\omega}(s),
\end{align*}
for all $s=(s_{1},s_{2})\in r\cdot\mathbb{G}$.
It is well known that this function is well defined on $\mathbb{G}$ and belongs to $\mathcal{S}(\mathbb{G})$.\end{proof}

\begin{example}{\normalfont{
For any $\omega_{1},\omega_{2}\in\mathbb{T}$ and $r\in(0,1)$, the function
$$f(s)=\dfrac{r-\sqrt{2}\phi_{\omega_{2}r^{-1}}(s) }{r\sqrt{2}-\phi_{\omega_{2}r^{-1}}(s) }\phi_{\omega_{1}}(s) \ \text{for all} \ 
    s=(s_{1},s_{2})\in r\cdot\mathbb{G},    $$
 belongs to  $\mathcal{S}(r\cdot \mathbb{G})$.
}}
\end{example}
\begin{proof}
Suppose that 
$$\gamma=\dfrac{1}{\sqrt{2}}\begin{bmatrix}
    1 \\
    1
\end{bmatrix}, ~u=\dfrac{1}{\sqrt{2}}\begin{bmatrix}
    -1 \\
    1
\end{bmatrix}$$ 
and $\beta=e_{1}$, $v=e_{2}$ for the standard basis $e_{1},e_{2}$ of $\mathbb{C}^{2}$. By Example \ref{rankonematricesgr}, the function
$$f(s)=\dfrac{r-\sqrt{2}\phi_{\omega_{2}r^{-1}}(s)}{r\sqrt{2}-\phi_{\omega_{2}r^{-1}}(s)}\phi_{\omega_{1}}(s),$$
where $s\in r\cdot\mathbb{G}$ and $\phi_{z}$ is given by formula (\ref{defszforgr}), belongs to $\mathcal{S}(r\cdot\mathbb{G})$.\end{proof}

\section{Declarations}

\noindent {\bf EPSRC grants.}
Evans was supported by the Engineering and Physical Sciences Research Council grant DTP21  EP/T517914/1.
Lykova and Young were partially supported by the Engineering and Physical Sciences Research Council grant EP/N03242X/1.

\noindent {\bf Conﬂict of interest.} The authors have no Conﬂict of interest to declare that are
relevant to the content of this article.

\noindent {\bf Data availability statement.} 
 No data were collected, generated or consulted in connection with this research.\\

\end{document}